\documentclass{amsart}
\usepackage{bbm}
\usepackage{enumitem}
\usepackage{amsmath}

\usepackage{subcaption}
\usepackage{algorithm}
\usepackage{algorithmic}
\usepackage{tikz}
\usetikzlibrary{matrix}
\tikzset{sqvec/.style={matrix, thick}}
\usepackage[square,numbers]{natbib}
\usepackage{hyperref}
\hypersetup{
    colorlinks=true,
    allcolors=blue,
    pdftitle={Exact recovery for seeded graph matching},
    pdfpagemode=FullScreen,
}

\title{Exact recovery for seeded graph matching}
\author{Nicolas Fraiman, Michael Nisenzon}
\date{\today}

\newcommand{\Ex}[1]     {\mathbb{E}\left[#1\right]}
\newcommand{\Var}[1]    {\mathbb{V}\left[#1\right]}
\newcommand{\Prob}[2][] {\mathbb{P}_{#1}\left(#2\right)}
\newcommand{\Ind}[1]    {\mathbbm{1}_{#1}}

\newcommand{\expt}[1] {\exp\left( #1 \right)}
\newcommand{\Nbr}[2][]    {\mathcal{N}_{#1}\left(#2\right)}

\newcommand{\eps}   {\varepsilon}

\newcommand{\I}    {\mathcal{I}}
\newcommand{\J}    {\mathcal{J}}
\newcommand{\U}    {\mathcal{U}}
\newcommand{\R}    {\mathcal{R}}
\newcommand{\C}    {\mathcal{C}}
\newcommand{\Score}    {\mathcal{S}}
\newcommand{\ddegA}{\deg_{A,\U}}
\newcommand{\ddegB}{\deg_{B,\U}}

\DeclareMathOperator{\sgn}{sign}

\DeclareMathOperator{\scsbm}{SCSBM}

\DeclareMathOperator{\Bern}{Bern}
\DeclareMathOperator{\Bin}{Bin}

\theoremstyle{plain}
\newtheorem{theorem}{Theorem}
\newtheorem{lemma}{Lemma}

\theoremstyle{remark}

\theoremstyle{definition}
\newtheorem{definition}{Definition}

\allowdisplaybreaks
\begin{document}
\begin{abstract}
We study graph matching between two correlated networks in the almost fully seeded regime, where all but a vanishing fraction of vertex correspondences are revealed. Concretely, we consider the correlated stochastic block model and assume that $|\U| = n^{1-\alpha}$ vertices remain unrevealed for some $\alpha \in (0,1)$, while the remaining $n - n^{1-\alpha}$ vertices are provided as seed correspondences. Our goal is to determine when the true permutation can be recovered efficiently as the proportion of unrevealed vertices vanishes.

We prove that exact recovery of the remaining correspondences is achievable in polynomial time whenever $\lambda s^{2} > 1 - \alpha$, where $\lambda = (a+b)/2$ is the SBM density parameter and $s$ denotes the edge retention parameter. This condition smoothly interpolates between the fully seeded setting and the classical unseeded threshold $\lambda s^{2} > 1$ for matching in correlated Erd\H{o}s--R\'enyi graphs. Our analysis applies to both a simple neighborhood--overlap rule and a bistochastic relaxation followed by projection, establishing matching achievability in the almost fully seeded regime without requiring spectral methods or message passing.

On the converse side, we show that below the same threshold, exact recovery is information-theoretically impossible with high probability. Thus, to our knowledge, we obtain the first tight statistical and computational characterization of graph matching when only a vanishing fraction of vertices remain unrevealed. Our results complement recent progress in semi-supervised community detection, such as \cite{Nis24}, by demonstrating that revealing all but $n^{1-\alpha}$ correspondences similarly lowers the information threshold for graph matching.
\end{abstract}

\maketitle

\section{Introduction}

Graph matching is the task of recovering an unrevealed bijection between the vertex sets of two related networks, and arises in numerous domains including social network de-anonymization, biological network alignment, multimodal knowledge graph integration, and multi--view learning. The canonical setting involves two graphs that are noisy or anonymized views of a shared structure, and the goal is to infer both the true correspondence and, in many applications, true community structure across the graphs.

A widely studied model for this problem is the correlated Erd\H{o}s--R\'enyi model, in which each observed graph $A$ and $B$ is obtained by independently subsampling edges from a common parent graph with edge correlation $s \in [0,1]$. Classical results show that exact recovery in the unseeded setting---where no vertex correspondences are revealed---is possible if and only if the expected degree of the intersection graph exceeds the graph connectivity threshold. Precisely, Cullina and Kiyavash~\cite{CK17} established that exact matching succeeds if $\lambda s^2 > 1$, where $\lambda \log n$ is the average degree and $n$ is the number of vertices, while below this threshold the true permutation becomes statistically indistinguishable due to automorphisms.

Recent work has sharpened these information-theoretic boundaries. Wu, Xu, and Yu~\cite{wu2022settling} derived the sharp threshold for exact unseeded matching in correlated Erd\H{o}s--R\'enyi graphs in both the dense and sparse cases. In structured random graphs, R\'acz and Sridhar~\cite{Racz21} resolved the analogous exact threshold for sparse correlated SBMs, showing that exact unseeded matching is possible if and only if the information-theoretic community signal is sufficiently strong. On the algorithmic front, significant progress has been made toward meeting these limits: Barak et al.~\cite{barak2019black} developed the first quasipolynomial-time algorithm capable of operating far from the near-identical regime $s \approx 1$, Ding et al.~\cite{ding2021efficient} gave a polynomial-time degree-profile method for moderate noise, and Mao et al.~\cite{mao2023} achieved polynomial-time exact recovery at constant correlation in sparse graphs via structured subgraph counts. Most recently, Chai and R\'acz~\cite{chai2024matching} obtained the first polynomial-time exact recovery algorithm for correlated SBMs in the full information-theoretic regime. Taken together, these works nearly close the statistical--computational gap for unseeded matching in random graph models.

\subsection{Seeded graph matching}

When a subset of vertex correspondences ("seeds") is provided, matching becomes dramatically easier. Pedarsani and Grossglauser~\cite{pedarsani2011privacy} showed that even a sublinear number of seeds can enable recovery in regimes where unseeded matching is impossible. Yartseva and Grossglauser~\cite{yartseva2013analysis} developed a percolation-based algorithm showing that above a critical seed density, correct matches propagate across the graph. Korula and Lattanzi~\cite{korula2014efficient} gave a greedy seed-expansion procedure with guarantees in correlated Erd\H{o}s--R\'enyi graphs, while Lyzinski et al.~\cite{Lyzinski14} analyzed convex relaxations and highlighted fragility in certain relax-and-round approaches. Empirically, even a handful of seeds can significantly boost matching accuracy: Fishkind et al.~\cite{fishkind2019seeded} demonstrated strong gains in real networks.

Previous work narrowed the gap between information--theoretic and computational requirements in seeded matching. Wang et al.~\cite{Wang21, Wang24} derived sharp information-theoretic thresholds for exact recovery for Erd\H{o}s--R\'enyi graphs with seeds, showing how the minimum seed cardinality scales with sparsity and correlation. They also introduced an efficient algorithm that achieves the threshold. Mossel and Xu~\cite{Xu19} further introduced the large-neighborhood-statistics method, achieving the information-theoretic sparsity limit in polynomial time and showing that as few as $\Omega(\log n)$ seeds in dense graphs (or $n^{3\eps}$ in sparse graphs) suffice for exact recovery.

Seeded graph matching now has methods that are nearly optimal both statistically and computationally in a range of random graph models. A classical approach in the unseeded case solves a relaxation of the quadratic assignment problem, as in the FAQ algorithm~\cite{vogelstein2015fast}, which uses Frank--Wolfe optimization to project onto the Birkhoff polytope. The Seeded Graph Matching (SGM) algorithm~\cite{fishkind2019seeded} integrates seeds directly and produces ranked candidate matches. Yu, Xu, and Lin~\cite{yu2021graph} studied robust matching with partially corrupted seeds, showing that exact recovery remains achievable provided a constant fraction of seeds are correct, and proposing down-weighting strategies. Additional relaxations and kernel formulations have been explored, such as KerGM~\cite{Zhang2019}, which uses kernelized quadratic assignment relaxations to incorporate structural and attribute similarity. Recent deep alignment pipelines build on these ideas but typically trade interpretability and guarantees for supervised flexibility. Seed-supervised GNN architectures like SeedGNN~\cite{yu2023} and embedding-based systems such as DeepLink~\cite{zhou2018} achieve strong empirical performance, yet lack provable recovery guarantees in the regimes studied here.

\subsection{Our contribution}

We study the regime where almost all vertex correspondences are revealed, leaving only a vanishing fraction $|\U| = n^{1-\alpha}$ of unrevealed vertices for some $\alpha \in (0,1)$. In this partially revealed setting, we show that simple neighborhood--overlap--based methods, as well as a bistochastic linear programming relaxation followed by projection, achieve exact recovery of the remaining matches in polynomial time whenever $\lambda s^2 > 1 - \alpha$, where $\lambda = (a+b)/2$ is the average SBM density. While we focus on the almost fully seeded regime $|\U| = n^{1-\alpha}$ for clarity, the threshold $\lambda s^2 > 1-\alpha$ smoothly interpolates between the fully seeded case and the classical unseeded limit $\lambda s^2 > 1$ when $|\U|=\Theta(n)$.

In addition to these sharp statistical and algorithmic guarantees, we provide a comprehensive empirical
evaluation on both synthetic and real networks.
Our experiments confirm the theoretical threshold behavior on correlated stochastic block models, showing a rapid transition to near-exact recovery as the seed fraction increases and the condition $\lambda s^2 > 1-\alpha$ is met. On a diverse collection of real-world graphs---including biological, communication, social, and internet topology networks---we find that even very simple overlap-based methods are competitive with state-of-the-art seeded matching algorithms when a large fraction of correspondences is revealed. These results demonstrate that the almost fully seeded regime is not only theoretically tractable, but also practically robust across heterogeneous graph structures.

Taken together, our results provide the first tight statistical and computational characterization of
graph matching when only a vanishing fraction of vertices remain unrevealed. They complement recent progress in semi-supervised community detection~\cite{Nis24} by showing that revealing all but $n^{1-\alpha}$ correspondences similarly lowers the information threshold for graph matching alone, and that this phenomenon is observable in practice as well as in theory.

\section{Setup and Notation}
We begin by formally defining the \textbf{Seeded Correlated Stochastic Block Model}, denoted by $\scsbm$, which serves as the generative model for the observed graph pair $(A, B)$ throughout this paper.

\begin{definition}[Seeded Correlated SBM]
\label{def:plSBM}
Let $n \in \mathbb{N}$ denote the number of vertices, and fix connectivity parameters
$a,b>0$, a retention parameter $s\in[0,1]$, and a revealed (seed) set $\R\subset[n]$.
The \textbf{Seeded Correlated Stochastic Block Model}
$(A,B)\sim\scsbm(n,a,b,s,\R,\sigma^*,\pi^*)$
is generated as follows:
\begin{enumerate}
\item \textbf{True communities.}
A balanced labeling $\sigma^*:[n]\to\{-1,+1\}$ is fixed, inducing two communities
$\C_1=(\sigma^*)^{-1}(1)$ and $\C_2=(\sigma^*)^{-1}(-1)$ with
$|\C_1|=|\C_2|=n/2$.

\item \textbf{True permutation.}
A bijection $\pi^*:[n]\to[n]$ is sampled uniformly at random and represents the
ground truth vertex correspondence between the two graphs.

\item \textbf{Parent SBM.}
An undirected simple graph $G$ with adjacency matrix
$G\in\{0,1\}^{n\times n}$ is generated as follows.
For each unordered pair $1\le u<v\le n$,
\[
G(u,v)\sim
\begin{cases}
\Bern(a\log n/n), & \sigma^*(u)=\sigma^*(v),\\
\Bern(b\log n/n), & \sigma^*(u)\neq\sigma^*(v),
\end{cases}
\]
with $G(v,u)=G(u,v)$ and $G(u,u)=0$.

\item \textbf{Correlated subsampling.}
Conditioned on $G$, the graphs $A$ and $B'$ are generated independently by
subsampling edges of $G$:
for each $u<v$,
\[
A(u,v)=G(u,v)\cdot X_{uv},\qquad
B'(u,v)=G(u,v)\cdot Y_{uv},
\]
where $X_{uv},Y_{uv}\stackrel{\text{i.i.d.}}{\sim}\Bern(s)$.
Again, $A$ and $B'$ are symmetric with zero diagonals.

\item \textbf{Anonymization.}
The observed graph $B$ is obtained by permuting $B'$ according to $\pi^*$:
\[
B(u,v)=B'((\pi^*)^{-1}(u),(\pi^*)^{-1}(v)).
\]

\item \textbf{Revealed information.}
The algorithm uses the exact correspondence $\pi^*(r)$ for all $r\in\R$,
as well as the community labels $\sigma^*(r)$.
The revealed set $\R$ is assumed to be balanced:
\[
|\C_1\cap\R|=|\C_2\cap\R|.
\]
The set of unrevealed vertices is $\U=[n]\setminus\R$.
\end{enumerate}
The observed data consist of $(A,B)$ together with the restrictions $\pi_\R := \pi^*|_\R$ and $\sigma_\R := \sigma^*|_\R$.
\end{definition}

\begin{definition}[Exact Graph Matching]
Let $(A,B)\sim\scsbm(n,a,b,s,\R,\sigma^*,\pi^*)$.
A graph matching algorithm $\hat{\pi} = \hat{\pi}[A,B, \sigma_\R,\pi_\R]$ takes the generated graphs and partially revealed labels as input and outputs an estimator of the permutation
$\hat{\pi}:[n]\to[n]$.
We say that \textbf{exact recovery} is achieved if
\[
\Prob{\hat{\pi}(u)=\pi^*(u)\ \text{for all }u\in[n]}=1-o(1).
\]
Since $\pi^*$ is known on $\R$, this is equivalent to exact recovery on the unknown $\U$.
\end{definition}

In the fully unseeded case $\R = \emptyset$, prior work \cite{CK17} established that exact graph matching is information-theoretically possible if and only if the edge correlation exceeds the connectivity threshold: $\lambda s^2 > 1$, where $\lambda := (a + b)/2$. This threshold corresponds to the emergence of a giant component in the intersection graph $A \wedge B$.

More recent results ~\cite{Xu19} demonstrate that the presence of even a small seed set $\R$ enables efficient, polynomial-time recovery. In particular, a neighborhood--overlap-based estimator can achieve exact recovery under the same information-theoretic threshold. When the seed set is large---e.g., $|\R| = (1 - o(1))n$---the effective threshold for recovery improves, and the corresponding algorithms become more robust to noise. For seeded, correlated Erd\H{o}s--R\'enyi graphs, Wang et al.~\cite{Wang24} consider the effect of the size of $\R$ on the threshold for exact graph matching. Echoing the bound of~\cite{Nosratinia2018}, they show that exact graph matching is possible at $\lambda s^2 > 1-\alpha$ when $|\U| = n^{1-\alpha}$. Our theorems in the next section quantify these improvements precisely for both graph matching and community recovery tasks in the seeded correlated SBM.

As we consider fixed underlying community labels and permutations, we restrict our analysis to equivariant estimators that preclude the possibility of oracles. We first define equivariance for graph matching and community detection on the correlated SBM as follows:

\begin{definition}[Equivariance] Let $(A,B) \sim \scsbm(n,a,b,s,\R,\sigma^*,\pi^*)$. Let $\Pi_{\U}$ denote the set of bijections $[n]\to[n]$ that act as the identity on $\R$. Then an estimator $\hat{\pi}$ of $\pi^*$ is permutation equivariant if, for any permutation $\tau \in \Pi_\U$ of $\U$,
\[
\hat{\pi}(A, B^{\tau}, \pi_\R,\sigma_\R) = \tau \circ \hat{\pi}(A,B, \pi_\R,\sigma_\R)
\]
where $B^{\tau}$ denotes the matrix obtained by relabeling the vertices in $\U$ by $\tau$ while leaving the vertices in $\R$ fixed and $(\tau \circ \hat{\pi})(u) = \tau(\hat{\pi}(u))$.
In addition, $\hat{\pi}$ is label equivariant if for any relabeling $\ell:\{-1,+1\}\to\{-1,+1\}$ of communities,
\[
\hat{\pi}(A, B, \pi_\R, \sigma_\R^\ell) = \hat{\pi}(A,B,\pi_\R, \sigma_\R)
\]
where we define $\sigma_\R^\ell := \ell \circ \sigma_\R$.
Formally, $\hat{\pi}$ is an equivariant estimator for $\pi^*$ if it is both label and permutation equivariant.

\end{definition}

\section{Description of the Algorithms}
We present a sequence of four algorithms ranging from simple neighborhood--overlap heuristics to convex relaxations and their fast first-order approximations---that all exploit the revealed vertices $\R$. We will show that three of these algorithms achieve exact recovery in the almost fully seeded regime.

We begin with two simple estimators that exploit the revealed vertices $\R$ as anchors. When $|\R|$ is large, each unrevealed vertex $u\in\U$ has a substantial number of edges into $\R$, and the intersection of these neighborhoods across the two graphs becomes highly informative. Algorithms~\ref{alg:seeded_matching},\ref{alg:greedy_matching} formalize this idea by assigning to each pair $(u,v)\in\U\times\U$ a score equal to the number of common neighbors in the revealed set. The resulting score matrix aggregates independent Bernoulli evidence across the seeds and admits a sharp concentration gap between the true match and all false candidates when $\lambda s^{2} > 1-\alpha$. The matching is then obtained either by solving a linear assignment problem using the Hungarian algorithm or by greedy selection.

\begin{algorithm}[t]
\caption{Seeded graph matching via Hungarian algorithm.}
\label{alg:seeded_matching}
\begin{algorithmic}[1]
\STATE \textbf{Input:} adjacency matrices $A,B$ on $[n]$, revealed set $\R\subset[n]$, unrevealed set $\U=[n]\setminus\R$.
\STATE \textbf{Output:} permutation estimate $\hat{\pi}$.
\STATE \textbf{Generate scores:}
For $u,v \in\U$, define $\Nbr[A]{u} = \{r \in \R: A(u,r)=1\}$ and $\Nbr[B]{v} = \{r \in \R: B(v,\pi_\R(r))=1\}$. Set $\Score(u,v) := |\Nbr[A]{u} \cap \Nbr[B]{v}|$ for all $u,v\in\U$.
\STATE \textbf{Apply linear assignment to pick optimal scores:} We apply the Hungarian algorithm \cite{hungarian} to get a matching $\pi_\U$ from the score matrix $\Score$ on the unrevealed nodes $\U$ such that $\pi_\U := \arg\max_{\pi \in \Pi_\U} \sum_{u \in \U}\Score(u,\pi(u))$.
\STATE \textbf{Combine with seeds:} We then define $\hat{\pi}(u) := \pi_\R(u)$ for $u \in \R$ and $\hat{\pi}(u) = \pi_\U(u)$ for $u \in \U$.
\STATE \textbf{Return} $\hat{\pi}$.
\end{algorithmic}
\end{algorithm}

\begin{algorithm}[t]
\caption{Seeded graph matching via greedy algorithm.}
\label{alg:greedy_matching}
\begin{algorithmic}[1]
\STATE \textbf{Input:} adjacency matrices $A,B$ on $[n]$, revealed set $\R\subset[n]$, unrevealed set $\U=[n]\setminus\R$.
\STATE \textbf{Output:} permutation estimate $\hat{\pi}$.
\STATE \textbf{Generate scores:}
For $u,v \in\U$, define $\Nbr[A]{u} = \{r \in \R: A(u,r)=1\}$ and $\Nbr[B]{v} = \{r \in \R: B(v,\pi_\R(r))=1\}$. Set $\Score(u,v) := |\Nbr[A]{u} \cap \Nbr[B]{v}|$ for all $u,v\in\U$.
\STATE \textbf{Greedy selection:} Initialize all $u\in\U$ and $v\in\U$ as unmatched. Consider the multiset of all pairs $(u,v)\in \U\times \U$ with weights $\Score(u,v)$. Process pairs in nonincreasing order of $\Score(u,v)$, and whenever both $u$ and $v$ are currently unmatched, add the match $u\mapsto v$ and mark them matched. Continue until all vertices in $\U$ are matched.
\STATE \textbf{Combine with seeds:} Set $\hat{\pi}(u)=\pi_\R(u)$ for $u\in\R$, and $\hat{\pi}(u)$ equal to the greedy match for $u\in\U$.
\STATE \textbf{Return} $\hat{\pi}$.
\end{algorithmic}
\end{algorithm}

The preceding algorithms are based purely on combinatorial score maximization over a $\U\times\U$ score matrix. We now introduce a convex relaxation that places these heuristics within a broader optimization framework. Algorithm~\ref{alg:lp_matching} formulates seeded graph matching as an $\ell_{1}$ minimization problem over the Birkhoff polytope, enforcing exact agreement on the revealed vertices while allowing fractional matchings on the unrevealed block. The objective $\|AD-DB\|_{1}$ penalizes discrepancies between adjacency patterns under the proposed matching and decomposes into terms involving revealed and unrevealed vertices. This LP can be viewed as a seeded version of standard quadratic assignment relaxations \cite{vogelstein2015fast}, but with an $\ell_{1}$ objective.

\begin{algorithm}[t]
\caption{Seeded graph matching via bistochastic LP + projection}
\label{alg:lp_matching}
\begin{algorithmic}[1]
\STATE \textbf{Input:} adjacency matrices $A,B$ on $[n]$, revealed set $\R\subset[n]$, unrevealed set $\U=[n]\setminus\R$.
\STATE \textbf{Output:} permutation estimate $\hat{\pi}$.
\STATE \textbf{Decision variables:} $D\in\mathbb{R}_{\geq 0}^{n\times n}$ (bistochastic alignment), $Y\in\mathbb{R}_{\geq 0}^{n\times n}$ (slack).
\STATE \textbf{LP objective:} minimize $\|Y\|_{1}=\sum_{i,j} Y_{ij}$.
\STATE \textbf{LP constraints:}
\STATE (i) Row/column sums: $D\mathbf{1}=\mathbf{1}$ and $\mathbf{1}^\top D=\mathbf{1}^\top$.
\STATE (ii) Linearization of $L_1$ residuals: $
Y \geq AD-DB,\quad Y \geq -(AD-DB)$.
\STATE (iii) Seed fixing: 
\[D_{r,i}=\Ind{i=\pi_\R(r)} \ \forall r\in\R,
\qquad
D_{j,\pi_\R(r)}=\Ind{j=r} \ \forall r\in\R.
\]
\STATE \textbf{Solve LP:} obtain an optimal solution $(D^*,Y^*)$ using any LP solver.
\STATE \textbf{Use Hungarian algorithm to project to a permutation on $\U$:}
\STATE (i) Form the weight matrix $W:=D^*_{\U,\U}$.
\STATE (ii)  Compute a maximum-weight matching $\pi_{\U}\in\Pi_{\U}$ by solving
\[
\pi_{\U}\in\arg\max_{\pi\in\Pi_{\U}} \sum_{u\in\U} W_{u,\pi(u)}.
\]
\STATE \textbf{Combine with seeds:} Set $\hat{\pi}(u)=\pi_\R(u)$ for $u\in\R$ and $\hat{\pi}(u)=\pi_{\U}(u)$ for $u\in\U$.
\STATE \textbf{Return} $\hat{\pi}$.
\end{algorithmic}
\end{algorithm}

Although Algorithm~\ref{alg:lp_matching} provides strong theoretical guarantees, its direct implementation requires solving a large-scale linear program, which is computationally prohibitive for very large graphs. To address this, we also consider a fast first-order method that approximately optimizes the same $\ell_{1}$ objective while operating only on the unrevealed block $\U\times\U$. Algorithm~\ref{alg:fw_linear_matching} applies a Frank--Wolfe procedure to the $\ell_{1}$ seeded alignment objective, using the Hungarian algorithm as the linear minimization oracle at each iteration. This approach preserves the seed constraints and avoids forming the entire $n\times n$ decision variable. In Algorithm~\ref{alg:fw_linear_matching}, we work in a canonical vertex ordering in which the revealed set $\R$ precedes the unrevealed set $\U$. We first reindex the adjacency matrix $B$ using the revealed correspondence $\pi_\R$ (leaving the vertices in $\U$ in a default order), and all block matrices are taken with respect to this $\R/\U$ partition. Thus, the algorithm depends only on the observed graphs $A,B$ and the revealed matching $\pi_\R$, and does not require access to the unknown permutation $\pi^*$. Although we do not establish formal convergence guarantees for this variant, our experiments show performance comparable to that of the exact LP solver in the almost fully seeded regime.

\begin{algorithm}[ht]
\caption{Seeded graph matching via Frank--Wolfe $\ell_1$ relaxation}
\label{alg:fw_linear_matching}
\begin{algorithmic}[1]
\STATE \textbf{Input:} adjacency matrices $A,B$ on $[n]$, revealed set $\R\subset[n]$, unrevealed set $\U=[n]\setminus\R$, number of FW iterations $T$.
\STATE \textbf{Canonical ordering:} Relabel vertices so that $\R=\{1,\dots,|\R|\}$ and $\U=\{|\R|+1,\dots,n\}$.
\STATE \textbf{Seed-aligned graph:} Define $\widetilde B := P_{\pi_\R}^{\top} B P_{\pi_\R}$, where $P_{\pi_\R}$ is a permutation on $[n]$ that equals $\pi_\R$ on $\R$ and is the identity on $\U$.
\STATE \textbf{Output:} permutation estimate $\hat{\pi}$.
\STATE \textbf{Decision variables:} $D_{\U,\U}\in\mathbb{R}_{\ge 0}^{|\U|\times|\U|}$. 
\STATE \textbf{Initialization:} initialize $D_{\U,\U}^{(0)}$ to any feasible doubly stochastic matrix (e.g.\ the uniform matrix).

\FOR{$t=0,1,\ldots,T-1$}
    \STATE \textbf{Compute residual blocks (with respect to the $\R/\U$ partition):}
    \begin{align*}
    R_{\R,\U} &= \widetilde B_{\R,\U}D_{\U,\U}^{(t)} - A_{\R,\U},\\
    R_{\U,\R} &= \widetilde B_{\U,\R} - D_{\U,\U}^{(t)}A_{\U,\R},\\
    R_{\U,\U} &= \widetilde B_{\U,\U}D_{\U,\U}^{(t)} - D_{\U,\U}^{(t)}A_{\U,\U}.
    \end{align*}

    \STATE \textbf{$\ell_1$ subgradient blocks:}
    \begin{align*}
    &G_{\R,\U}=\sgn(R_{\R,\U}),\\
    &G_{\U,\R}=\sgn(R_{\U,\R}),\\
    &G_{\U,\U}=\sgn(R_{\U,\U}).
    \end{align*}

\STATE \textbf{Gradient w.r.t.\ the decision block $D_{\U,\U}$:}
\begin{align*}
\nabla f\!\left(D_{\U,\U}^{(t)}\right)
&= \widetilde B_{\R,\U}^\top G_{\R,\U}
   - G_{\U,\R}A_{\U,\R}^\top \\
&\quad + \widetilde B_{\U,\U}^\top G_{\U,\U}
   - G_{\U,\U}A_{\U,\U}^\top .
\end{align*}

    \STATE \textbf{Frank--Wolfe linear minimization:}
    \[
    S^{(t)} \in \arg\min_{S\in\Pi_{\U}}
    \left\langle \nabla f\!\left(D_{\U,\U}^{(t)}\right),\, S\right\rangle.
    \]

    \STATE \textbf{Step size:} $\gamma_t := \frac{1}{t+2}$.

    \STATE \textbf{FW update (unrevealed block):}
    \[
    D_{\U,\U}^{(t+1)} := (1-\gamma_t)D_{\U,\U}^{(t)} + \gamma_t S^{(t)}.
    \]
\ENDFOR

\STATE \textbf{Project to a permutation on $\U$:}
form $W:=D_{\U,\U}^{(T)}$ and compute
\[
\pi_{\U}\in\arg\max_{\pi\in\Pi_{\U}}
\sum_{u\in\U} W_{u,\pi(u)}
\]
using the Hungarian algorithm.

\STATE \textbf{Combine with seeds:}
set $\hat{\pi}(u)=\pi_\R(u)$ for $u\in\R$ and $\hat{\pi}(u)=\pi_{\U}(u)$ for $u\in\U$.

\STATE \textbf{Return} $\hat{\pi}$.
\end{algorithmic}
\end{algorithm}

\section{Proofs of the Main Results}
We provide the threshold for exact recovery in Theorem \ref{th:lower_SBM}. We then show that Algorithms \ref{alg:seeded_matching}, \ref{alg:greedy_matching}, and
\ref{alg:lp_matching} achieve this threshold.  Algorithm~\ref{alg:fw_linear_matching} is a computationally efficient approximation to the LP relaxation, but we do not establish formal convergence or recovery guarantees for it.

\subsection{Lower Bound: Impossibility Below the Threshold}

Previous proofs \cite{CK17} show impossibility by comparing statistically indistinguishable permutations of the isolated nodes
\[
\I := \{u : u \in \U, \sum_{v \in [n]} A(u,v)B(\pi^*(u),\pi^*(v)) = 0\}
\]
i.e., $u$ has no neighbors in the intersection graph under the true alignment.

We simplify this argument in seeded graphs with a useful subset of nodes $\J \subset \I$ that are difficult to match correctly. Intuitively, $\J$ contains vertices whose observed adjacencies are symmetric under within-community swaps. 
\begin{definition}[Hard isolated vertices] A node $u \in \U$ is in $\J$ if
\begin{enumerate}
    \item $u \in \I$.
    \item $A(u,v) = 0$ and $B(\pi^*(u),\pi^*(v)) = 0$ for all $v \in \U$.
    \item For all $w \in \U$ and all $v\in \R$, 
    \[
    A(u,v) = 1 \implies B(w,\pi_\R(v)) = 0.
    \]
\end{enumerate}
    
\end{definition}

We start with a lemma on the overlap scores $\Score(u,v)$ for unrevealed vertices $u,v\in\U$.

\begin{lemma}\label{lem:bad}
Let $(A,B) \sim \scsbm(n,a,b,s,\R,\sigma^*,\pi^*)$ be defined with $|\U| = n^{1-\alpha}$ for some $\alpha \in [0,1]$. Then for any unrevealed $u,v \in \U$ with $v \neq \pi^*(u)$, we have that for any $\eps > 0$, the size of the intersection neighborhood $\Score(u,v)$ satisfies 
\[
\Prob{\Score(u,v) \geq \eps \log n} \leq n^{-\eps\log n + o(\eps\log n)}.
\]
In addition, if $v = \pi^*(u)$, then for any $\eps > 0$,
\[
\Prob{\Score(u,v) \leq \eps \log n} \leq n^{-\lambda s^2 + O(\eps)}.
\]
\end{lemma}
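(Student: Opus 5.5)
We write $\Score(u,v)=\sum_{r\in\R}A(u,r)\,B(v,\pi_\R(r))$ as a sum of indicators over seeds and exploit independence across $r$. Both parts then follow from Chernoff-type bounds for binomials.

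\textbf{False pairs.} Suppose $v\neq\pi^*(u)$ and set $w=(\pi^*)^{-1}(v)\neq u$, so that $B(v,\pi_\R(r))=B'(w,r)$. For each $r\in\R$, the edges $G(u,r)$ and $G(w,r)$ are distinct parent edges, since $u,w\notin\R$. Hence the indicators $A(u,r)B'(w,r)=G(u,r)X_{ur}\,G(w,r)Y_{wr}$ are independent across $r$. Each has mean at most $s^2\max(a,b)^2\log^2 n/n^2$.

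It follows that $\Score(u,v)$ is stochastically dominated by $\Bin(n,p)$ with $p=C\log^2 n/n^2$. The standard tail bound gives
\[
\Prob{\Bin(n,p)\ge k}\le \binom{n}{k}p^k\le \left(\frac{e n p}{k}\right)^k .
\]
Take $k=\lceil\eps\log n\rceil$. Then $enp/k = O(\log n/(\eps n))$, so the bound is
\[
\exp\bigl(-k\log n + O(k\log\log n)\bigr) = n^{-\eps\log n+o(\eps\log n)} .
\]

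\textbf{True pair.} Now suppose $v=\pi^*(u)$, so $\Score(u,v)=\sum_{r\in\R}G(u,r)X_{ur}Y_{ur}$. This is a sum of independent Bernoulli variables. Its parameters are $a s^2\log n/n$ for the $|\C_{\sigma^*(u)}\cap\R|=(n-|\U|)/2$ same-community seeds and $b s^2\log n/n$ for the opposite-community seeds.

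Because $|\U|=n^{1-\alpha}=o(n)$, the mean is $\mu=(1-o(1))\lambda s^2\log n$. For the lower tail we use the Chernoff bound
\[
\Prob{\Score\le k}\le \exp\bigl(-\mu + k\log(e\mu/k)\bigr),
\]
obtained from $\Ex{e^{-t\Score}}\le\exp(-\mu(1-e^{-t}))$ by optimizing over $t$. With $k=\eps\log n$, the exponent is
\[
-\lambda s^2\log n\,(1-o(1)) + \eps\log n\,\log\bigl(e\lambda s^2/\eps\bigr).
\]
This yields $n^{-\lambda s^2+O(\eps\log(1/\eps))}$.

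\textbf{Main obstacle.} The only delicate point is the error term in the true-pair bound. The computation gives $O(\eps\log(1/\eps))$ rather than literally $O(\eps)$. We will either state it in that form, read $O(\eps)$ as a term that vanishes as $\eps\to0$, or use that $\eps$ will later be taken small, which is all that is needed downstream.

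The $o(1)$ in the mean contributes $o(\log n)$ to the exponent, which is absorbed into the error term. In the regime $\alpha=0$ as stated, $|\U|=n$ would break the approximation $\mu\approx\lambda s^2\log n$. We will therefore assume $\alpha>0$, or more generally $|\U|=o(n)$, for the second bound.
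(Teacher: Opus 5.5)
Your proposal is correct and is essentially the paper's argument. You write $\Score(u,v)$ as a sum of independent indicators over the seeds and dominate it by binomials: your $(enp/k)^k$ estimate for false pairs matches the paper's optimized-$t$ Chernoff bound up to a factor $e^{-\mu}=1-o(1)$, your choice $w=(\pi^*)^{-1}(v)$ with the uniform per-seed bound $s^2\max(a,b)^2\log^2 n/n^2$ is a cleaner justification than the paper's appeal to $a^2+b^2>2ab$, and your true-pair computation coincides with the paper's. Both of your caveats are genuine and apply equally to the paper's proof. Its last display contains $-\eps\log n\,\log(\eps/\lambda s^2)$, which it absorbs into $O(\eps\log n)$, although the correct error is $O(\eps\log(1/\eps))$, and this is sharp since $\Prob{\Score=k}\approx e^{-\mu}\mu^k/k!$. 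It also replaces $|\R|$ by $n$ without comment, which fails at $\alpha=0$, where $\R=\emptyset$ and $\Score\equiv 0$.
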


\begin{proof}
Let $p_a = a\log n/n$ and $p_b = b\log n/n$. The score for $u,v$ in distinct communities is stochastically dominated by the score when $u,v$ are in the same community as $(a^2+b^2) > 2ab$. As a result, when $v \neq \pi^*(u)$, $\Score(u,v) \preceq \Bin(|\R|/2, p_a^2s^2) + \Bin(|\R|/2, p_b^2s^2)$. Taking a Chernoff bound for any $t > 0$
\begin{align*}
    \log \Prob{\Score(u,v) > \eps\log n} &\leq \frac{ns^2}{2}(p_a^2+p_b^2)(e^t-1)-\eps t\log (n)\\
    &\leq -\frac{ns^2}{2}(p_a^2+p_b^2)+\eps \log n \\
    & \qquad- \eps\log (n)\log\left(\frac{\eps\log (n)}{ns^2(p_a^2+p_b^2)/2}\right) \\
    &\leq -s^2(a^2+b^2)\log^2(n)/2n+\eps \log n  \\
    &\qquad - \eps\log (n)\log\left(\frac{\eps n}{s^2(a^2+b^2)/2 \log n}\right) \\
    &= -\eps\log^2(n) + o(\eps\log^{2} n)
\end{align*}
where $t$ is optimized at $t^* = \log \left(\frac{\eps\log (n)}{ns^2(p_a^2+p_b^2)/2}\right)$.

The second statement follows from a similar Chernoff bound, where $\Score(u,v) \preceq \Bin(|\R|/2, p_as^2) + \Bin(|\R|/2, p_bs^2)$:
\begin{align*}
    \log \Prob{\Score(u,v) < \eps\log n} &\leq ns^2(p_a+p_b)(e^t-1)/2-\eps t\log (n)\\
    & \leq -ns^2(p_a+p_b)/2+\eps \log n \\
    &\qquad -\eps\log (n)\log\left(\frac{\eps\log (n)}{ns^2(p_a+p_b)/2}\right) \\
    & \leq -\lambda s^2\log(n)+\eps \log n - \eps\log (n)\log\left(\frac{\eps}{\lambda s^2}\right) \\
    & = -\lambda s^2\log(n) + O(\eps\log n)
\end{align*}
where $t$ is evaluated at $t^* = \log \left(\frac{\eps\log (n)}{ns^2(p_a+p_b)/2}\right) = \log(\frac{\eps }{\lambda s^2})$. 
\end{proof}

We use this lemma to derive bounds on the sizes of $\I$ and $\J$.

\begin{lemma}\label{lem:i_size} $\I$ satisfies $|\I| = n^{1-\alpha -\lambda s^2 + o(1)}$ with probability $1-o(1)$.
\end{lemma}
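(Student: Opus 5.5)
We will use the second moment method applied to $|\I| = \sum_{u\in\U}\Ind{u\in\I}$. The plan has three steps: compute the first moment, bound the variance, and combine via Chebyshev.

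\textbf{First moment.} Fix $u\in\U$. The event $u\in\I$ says that no edge $\{u,v\}$ with $v\in[n]\setminus\{u\}$ survives in both $A$ and the preimage $B'$. Given the community labels, these edges are independent. Each survives in both copies with probability $s^2 p_a$ (same community) or $s^2 p_b$ (different community). Roughly $n/2$ of the other vertices lie in each community, so
\[
\Prob{u\in\I} = (1-s^2p_a)^{n/2-1}(1-s^2p_b)^{n/2} = \expt{-\lambda s^2\log n + O(\log^2 n/n)} = n^{-\lambda s^2+o(1)}.
\]
This is exactly the quantity bounded in the second statement of Lemma~\ref{lem:bad} at $\eps\to 0$, and here it can be computed exactly. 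Summing over $|\U|=n^{1-\alpha}$ vertices gives $\Ex{|\I|} = n^{1-\alpha-\lambda s^2+o(1)}$.

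\textbf{Second moment.} For distinct $u,w\in\U$, the events $\{u\in\I\}$ and $\{w\in\I\}$ depend on disjoint edge sets except for the single pair $\{u,w\}$. Conditioning on the status of that one pair changes each probability by at most a factor $(1-s^2p_a)^{-1}=1+o(1)$. Hence
\[
\Prob{u,w\in\I}\le (1+o(1))\Prob{u\in\I}\Prob{w\in\I}.
\]
It follows that
\[
\Var{|\I|}\le \Ex{|\I|} + o\left(\Ex{|\I|}^2\right).
\]

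\textbf{Concentration.} When $1-\alpha-\lambda s^2>0$, we have $\Ex{|\I|}\to\infty$. Chebyshev's inequality then gives $|\I| = (1+o(1))\Ex{|\I|}$ with probability $1-o(1)$, which is the claim.

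\textbf{Main obstacle.} The real issue is interpreting the statement when $1-\alpha-\lambda s^2\le 0$. In that case $\Ex{|\I|}=n^{-\Omega(1)}$ or $n^{o(1)}$, and Markov's inequality only yields the upper bound $|\I|\le n^{1-\alpha-\lambda s^2+o(1)}$ with high probability; in particular $|\I|=0$ w.h.p. when the exponent is negative. I would therefore state the lemma as a two-sided estimate in the regime $\lambda s^2<1-\alpha$, which is the regime needed for the converse, and as an upper bound otherwise. The only other care needed is tracking the $o(1)$ in the exponent uniformly, which the exact product formula handles.
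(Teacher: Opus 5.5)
Your proposal is correct and follows the same route as the paper: an exact first moment, near-independence of the isolation events, and Chebyshev. Your observation that $\{u\in\I\}$ and $\{w\in\I\}$ share only the pair $\{u,w\}$, so that $\Prob{u,w\in\I}=(1+O(\log n/n))\Prob{u\in\I}\Prob{w\in\I}$, supplies the multiplicative control needed for $\Ex{|\I|^2}\le(1+o(1))\Ex{|\I|}^2$, which the paper's exponent-level bound $n^{-2(1+o(1))\lambda s^2}$ states only loosely. Your caveat is also right: the two-sided estimate requires $\lambda s^2<1-\alpha$ so that $\Ex{|\I|}\to\infty$ (the paper's Chebyshev step silently uses this, and it is the only regime needed in Theorem~\ref{th:lower_SBM}), whereas for $\lambda s^2>1-\alpha$ one only gets $|\I|=0$ with high probability.
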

\begin{proof}
    We bound the size of $\I$ using the second moment method. We start with the mean. For $i \in \U$, let $S_i$ be the indicator for node $i$ being isolated in the intersection graph $A \wedge B$ under the true permutation $\pi^*$. In addition, let $S := \sum_{i\in\U}S_i = |\I|$. Then by linearity of expectation
\begin{align*}
\Ex{S} &= \sum_{i \in \U} \Ex{S_i} \\
&= |\U| (1-s^2a\log n/n)^{n/2}(1-s^2b\log n/n)^{n/2} \\
&\geq |\U| \expt{-\frac{s^2a/2 \log n}{1-as^2\log n /n}} \cdot\expt{-\frac{s^2b/2\log n}{1-bs^2\log n /n}} \\
&\geq n^{1-\alpha}\expt{-\frac{\lambda s^2 \log n}{1-a s^2 \log n/n}}\\
&= n^{1-\alpha}n^{-\lambda s^2 \cdot (1+o(1))} 
\end{align*}

We now bound the second moment with exchangeability:
\begin{align*}
    \Ex{S^2} &= \sum_{i \in \U} \Ex{S_i^2} + \sum_{i \neq j \in \U} \Ex{S_iS_j} \leq \Ex{S} + |\U|^2 \Ex{S_iS_j}
\end{align*}
It is then sufficient to bound $\Ex{S_iS_j}$. We note that the probability that $i,j$ are both isolated can be bounded by
\[
\Ex{S_iS_j} \leq (1-s^2a\log n/n)^{n-1}(1-s^2b\log n/n)^{n-1} 
\]
Using the inequality $1+x \leq e^{x}$ gives an upper bound of 
\begin{align*}
\Ex{S_iS_j} &\leq \exp\left(-\frac{n-1}{n}(s^2a\log n+ s^2b\log n)\right) = n^{-2(1+o(1))\lambda s^2}
\end{align*}
We then see that 
\begin{align*}
    \Ex{S^2} &\leq |\U|n^{-\lambda s^2} + |\U|^2n^{-2(1+o(1))\lambda s^2} \\
    &= (1+o(1))\Ex{S}^2
\end{align*}
Applying Chebyshev's inequality lets us conclude
\begin{align*}
\Prob{|S-\Ex{S}| \geq \frac{1}{2}\Ex{S}} &\leq \frac{4(\Ex{S^2}-\Ex{S}^2)}{\Ex{S}^2}\\
&\leq \frac{4((1+o(1))\Ex{S}^2-\Ex{S}^2)}{\Ex{S}^2} \\
&= o(1)
\end{align*}
\end{proof}
\begin{lemma}\label{lem:j_size} $\J$ satisfies $|\J| = n^{1-\alpha -\lambda s^2 + o(1)}$  with probability $1-o(1)$.
\end{lemma}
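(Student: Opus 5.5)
Since $\J\subset\I$, Lemma~\ref{lem:i_size} already gives the upper bound $|\J|\le|\I| = n^{1-\alpha-\lambda s^2+o(1)}$ with probability $1-o(1)$. It therefore suffices to prove a matching lower bound. I will again use the second moment method, now applied to $T:=\sum_{i\in\U}T_i$, where $T_i$ is the indicator that $i\in\J$.

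The first step is the first moment. Fix $i\in\U$ and condition on the event $S_i=1$, i.e.\ $i\in\I$.

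\begin{itemize}
\item \textbf{Condition (2).} It requires that $i$ has no $A$- or $B$-edges into $\U$. Each pair $(i,v)$ with $v\in\U$ is an edge of $G$ with probability $O(\log n/n)$, so the expected number of such edges is $O(|\U|\log n/n)=O(n^{-\alpha}\log n)=o(1)$. Hence condition (2) holds with conditional probability $1-o(1)$.
\item \textbf{Condition (3).} The $A$-neighbourhood of $i$ in $\R$ has size $O(\log n)$ with probability $1-o(1)$; isolation in $A\wedge B$ only biases it downward. For each such neighbour $v\in\R$, condition (3) requires that $\pi_\R(v)$ has no $B$-edge into $\U$, i.e.\ into $\pi^*(\U)$ in $B$. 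The expected number of such edges is $O(|\U|\log n/n)=o(1)$.
\end{itemize}

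A union bound over the $O(\log n)$ neighbours $v$ gives failure probability $O(n^{-\alpha}\log^2 n)=o(1)$. The edges in question are disjoint from the pair-slots determining $S_i$ (except pairs involving $i$ itself, which are handled by condition (2)). Therefore the conditioning on $S_i=1$ only helps or is harmless: the events are decreasing in edges, so Harris/FKG applies, or one can simply use independence of the disjoint edge sets. This gives $\Ex{T}\ge(1-o(1))\Ex{S}=n^{1-\alpha-\lambda s^2+o(1)}$.

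The second step is the second moment. Since $T_iT_j\le S_iS_j$, the bound from Lemma~\ref{lem:i_size} transfers directly:
\[
\Ex{T^2}\le \Ex{T}+|\U|^2n^{-2(1+o(1))\lambda s^2}.
\]
Chebyshev's inequality then concentrates $T$ around its mean, which requires $\Ex{T}\to\infty$, i.e.\ $1-\alpha-\lambda s^2>0$. This is exactly the regime relevant for the converse.

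\textbf{Main obstacle.} The delicate point is matching the $o(1)$ exponents: the $|\U|^2$ cross term must be $(1+o(1))\Ex{T}^2$. This holds only if $\Ex{T}=(1-o(1))\Ex{S}$, not merely $\Ex{T}=n^{o(1)}\Ex{S}$. If the exponents do not match exactly, I would instead use Markov's inequality on $|\I\setminus\J|$. The computation above shows $\Ex{|\I\setminus\J|}=o(\Ex{|\I|})$, so with probability $1-o(1)$ we get $|\I\setminus\J|\le\tfrac12|\I|$. Combined with Lemma~\ref{lem:i_size}, this yields $|\J|\ge\tfrac12|\I|=n^{1-\alpha-\lambda s^2+o(1)}$. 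This is the route I would actually write, as it avoids recomputing the joint probabilities $\Ex{T_iT_j}$.
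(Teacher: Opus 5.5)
Your final route is correct and is essentially the paper's argument: show $\Prob{i\in\I\setminus\J}=o\bigl(\Prob{i\in\I}\bigr)$ by bounding the conditional failure probabilities of conditions (2) and (3) given $i\in\I$ (each $O(n^{-\alpha}\log^{2} n)$), then apply Markov to $|\I\setminus\J|$ and combine with Lemma~\ref{lem:i_size}. One small slip: in condition (3), the edges with $w=\pi^*(i)$ vanish because of the isolation requirement $i\in\I$, not because of condition (2).
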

\begin{proof}
Recall that $\J\subseteq\I$ by definition. By Lemma~\ref{lem:i_size}, $|\I| = n^{1-\alpha-\lambda s^2+o(1)}$ with probability $1-o(1)$. Moreover, for a fixed $u\in\U$, $\Prob{u\in\I}=n^{-\lambda s^2+o(1)}$. Therefore, it suffices to show that  $|\I\setminus\J| = o(|\I|)$ with probability $1-o(1)$. Fix $u\in\U$ and write $v^*=\pi^*(u)$. We bound the probability that $u\in\I \cap \J^c$. Given $u\in\I$, we have
\[
A(u,x)B(v^*,\pi^*(x))=0 \quad \text{for all } x\in[n].
\]
In particular, for any $x\in\U$, if $A(u,x)=1$, then necessarily $B(v^*,\pi^*(x))=0$. Since the edges from $u$ into $\U$ occur with probability $\Theta(\log n/n)$ and $|\U|=n^{1-\alpha}$, a union bound gives
\[
\Prob{\exists x\in\U:\ A(u,x)=1 \mid u\in\I}
\le O(n^{-\alpha}\log n)=o(1).
\]
By symmetry,
\[
\Prob{\exists x\in\U:\ B(v^*,\pi^*(x))=1 \mid u\in\I}=o(1).
\]
Hence, uniformly in $u$, $\Prob{\text{Cond. (2) fails} \mid u\in\I}=o(1)$.

Let $ S_u := \{v\in\R:\ A(u,v)=1\}$ be the seed neighborhood of $u$ in $A$. Condition (3) requires that for every $v\in S_u$ and every $w\in\U$, $B(w,\pi_\R(v))=0$. Conditioned on the revealed information, the events $\{B(w,\pi_\R(v))=0\}$ for distinct edges $(w,\pi_\R(v))$ are independent, with
\[
\Prob{B(w,\pi_\R(v))=1}=\Theta(\log n/n).
\]
Therefore, for some constant $c>0$,
\begin{align*}
\Prob{\text{Cond. (3) holds} \mid S_u}
&= \prod_{v\in S_u}\prod_{w \in \U} \Prob{B(w,\pi_\R(v))= 0} \\
&\ge \exp\left(-c\,|S_u|\,|\U|\,\log(n)/n\right)
\end{align*}
Conditional on $u \in \I$, a Chernoff bound gives $|S_u| = \Theta(\log n)$ with probability $1-o(1)$. Then for some $c_1 > 0$,
\begin{align*}
\Prob{\text{Cond. (3) holds} \mid u\in\I} &\geq \exp\left(-c_1n^{-\alpha}\log^2n\right) = 1-o(1).
\end{align*}
We then have uniformly over $u\in\U$ that
\begin{align*}
\Prob{u\in\I\setminus\J} &\leq \Prob{u\in\I}\cdot\Prob{\text{Cond. (3) fails} \mid u\in\I} = n^{-\lambda s^2+o(1)}\cdot o(1).
\end{align*}
By linearity of expectation,
\begin{align*}
\Ex{|\I\setminus\J|}
&= |\U|\Prob{u\in\I\setminus\J} \\
&= n^{1-\alpha}\cdot n^{-\lambda s^2+o(1)}\cdot o(1)\\
&= o\!\left(n^{1-\alpha-\lambda s^2}\right).
\end{align*}
By Markov, $|\I\setminus\J| = o(|\I|)$ with probability $1-o(1)$. Since $|\J|=|\I|-|\I\setminus\J|$, we conclude that $|\J| = n^{1-\alpha-\lambda s^2+o(1)}$ with probability $1-o(1)$.
\end{proof}

\begin{lemma}[Transposition invariance on $\J$]\label{lem:transposition_invariance}
Fix distinct $u,v\in\U$ with $\sigma^*(u)=\sigma^*(v)$, and let $\tau$ be the transposition
swapping $u$ and $v$ and fixing all other vertices.  Conditional on the revealed information
$(\pi_\R,\sigma_\R)$ and on the event $\{u,v\in\J\}$, we have
\[
(A,B,\pi_\R,\sigma_\R)\ \stackrel{d}{=}\ (A,B^\tau,\pi_\R,\sigma_\R).
\]
\end{lemma}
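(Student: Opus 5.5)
The plan is a likelihood-ratio computation. We show that the joint law of $(A,B)$, restricted to the event $\{u,v\in\J\}$ and conditioned on $(\pi_\R,\sigma_\R)$, is invariant under the relabeling $B\mapsto B^\tau$. We also show that this event is itself mapped to itself under the relabeling. Together these give the stated distributional identity.

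Step 1: reduce to the law of the pair $(A,B')$ together with $\pi^*$. Since $\pi^*$ is uniform subject to agreeing with $\pi_\R$ on $\R$, replacing $B$ by $B^\tau$ amounts to replacing $\pi^*$ by $\tau\circ\pi^*$ on $\U$. Here we read $\tau$ as acting on the $B$-side labels $\pi^*(u),\pi^*(v)$. The map $\pi^*\mapsto\tau\circ\pi^*$ is a bijection of the permutations consistent with $\pi_\R$, and it preserves the uniform law. So it suffices to show that for each fixed realization $(A,B)$ with $u,v\in\J$,
\[
\sum_{\pi}\Prob{A,B\mid\pi}=\sum_{\pi}\Prob{A,B^\tau\mid\pi},
\]
where both sums run over the permutations $\pi$ consistent with the seeds. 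The cleanest way to get this is to prove termwise that $\Prob{A,B\mid\pi}=\Prob{A,B\mid\tau\circ\pi}$ whenever $u,v\in\J$ under $\pi$.

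Step 2: compute $\Prob{A,B\mid\pi}$ as a product over vertex pairs $\{x,y\}$. Each factor depends on the pair $(A(x,y),B(\pi(x),\pi(y)))\in\{0,1\}^2$ and on whether $\sigma^*(x)=\sigma^*(y)$. Write $u'=\pi(u)$ and $v'=\pi(v)$. Passing from $\pi$ to $\tau\circ\pi$ changes only the factors for pairs involving $u$ or $v$: the $B$-row of $u'$ now pairs with the $A$-row of $v$, and vice versa. We then use the properties of $\J$ to compare these factors.
\begin{enumerate}
\item By condition (2), $u$ and $v$ have no $A$-edges into $\U$, and $u'$, $v'$ have no $B$-edges into $\pi(\U)$. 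Hence all pairs with both endpoints in $\U$ contribute the $(0,0)$ factor, both before and after the swap.
\item For seeds $r\in\R$, condition (3) says that whenever $A(u,r)=1$, every $w\in\U$ has $B(w,\pi_\R(r))=0$. So $A$-edges of $u$ or $v$ into seeds are never matched by $B$-edges, before or after the swap. Each pair $\{u,r\}$ therefore contributes a factor of type $(1,0)$, $(0,1)$ or $(0,0)$.
\item Condition (1) guarantees that no $(1,1)$ factor appears.
\end{enumerate}
Since $\sigma^*(u)=\sigma^*(v)$, the community label of $\{u,r\}$ equals that of $\{v,r\}$. So under the swap each seed pair keeps its community type. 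The multiset of factors is unchanged, because both $A$-edges and $B$-edges to seeds are unmatched in either alignment. The key point is that the factors of type $(1,0)$ and $(0,1)$ are products over disjoint seed coordinates, and they get redistributed between $u$ and $v$ without any change in value.

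Step 3: check that $\{u,v\in\J\}$ under $\pi$ is the same event as under $\tau\circ\pi$, so that the conditioning is symmetric. Conditions (2) and (3) are stated uniformly over $w\in\U$, and condition (1) follows from (2) and (3). Hence the swap preserves membership in $\J$.

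The main obstacle is Step 2: carefully bookkeeping the seed pairs. We must ensure that no seed $r$ has $A(u,r)=1$ together with $B(v',\pi_\R(r))=1$, which would create a new $(1,1)$ factor after the swap. This is exactly what condition (3) rules out, since it applies for all $w\in\U$, not just $w=u'$. We must also handle the factor for the pair $\{u,v\}$ itself, which is $(0,0)$ by condition (2).
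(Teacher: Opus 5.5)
Your proposal is correct and is essentially the paper's argument. The paper also fixes $\pi^*$, works with the seed-aligned $\widetilde B$, and checks pair by pair that each likelihood factor is invariant under the swap, using condition (2) for pairs inside $\U$, condition (3) for seed pairs (its case split on $(A(u,r),A(v,r))$ is your multiset bookkeeping), and $\sigma^*(u)=\sigma^*(v)$ for equal edge parameters. Your version is more explicit than the paper's about averaging over the uniform $\pi^*$ and about the event $\{u,v\in\J\}$ being preserved by the swap, but the sums in your Step 1 should carry the indicator of this $\pi$-dependent event, since what Steps 2 and 3 deliver, and what you actually sum over $\pi$, is the termwise identity $\Prob{A,B\mid\pi}\,\Ind{u,v\in\J \text{ under } \pi}=\Prob{A,B\mid\tau\circ\pi}\,\Ind{u,v\in\J \text{ under } \tau\circ\pi}$.
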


\begin{proof}
Let $\widetilde B:=P_{\pi_\R}^\top B P_{\pi_\R}$ be the seed-aligned version of $B$. Since $\tau$ fixes $\R$, $\tau$ commutes with this seed-alignment, so it suffices to prove
\begin{align*}
&\Prob{A=A_0,\widetilde B=\widetilde B_0\mid \pi_\R,\sigma_\R,u,v\in\J} = \Prob{A=A_0,\widetilde B=\widetilde B_0^\tau\mid \pi_\R,\sigma_\R,u,v\in\J}.
\end{align*}

Consider the joint distribution of the edge indicators
observable to the estimator: all edges in $A$ and all edges in $\widetilde B$ with at least one endpoint in
$\R\cup\{u,v\}$. Under the model, as $(\sigma^*,\pi^*)$ are fixed, these edge variables are mutually independent across unordered vertex pairs.

We check invariance of each likelihood factor under swapping $u$ and $v$. We first consider edges not incident to $u$ or $v$. If $\{i,j\}\cap\{u,v\}=\emptyset$, then $\widetilde B_0^\tau(i,j)=\widetilde B_0(i,j)$, so the corresponding likelihood factor is unchanged. 

Next, we consider edges between $u$ or $v$ and an unrevealed vertex. For $x\in\U\setminus\{u,v\}$, condition (2) in the definition of $\J$ implies
\[
A(u,x)=A(v,x)=0,\qquad \widetilde B(u,x)=\widetilde B(v,x)=0,
\]
so the likelihood contribution of the pair of edges $\{(u,x),(v,x)\}$ is invariant under $\tau$.

After this, we consider the edge $(u,v)$. This edge is fixed by the transposition, so its likelihood factor is unchanged. 

Finally, we consider the edges between $u$ or $v$ and a revealed vertex. Fix $r\in\R$ and consider the tuple
\[
(A(u,r),A(v,r),\widetilde B(u,r),\widetilde B(v,r)).
\]
Since $\sigma^*(u)=\sigma^*(v)$ and $\sigma^*(r)$ is revealed, the model assigns the same parameters to
edges $(u,r)$ and $(v,r)$. Thus, before conditioning on $u,v\in\J$, this quadruple is exchangeable under
swapping $u$ and $v$.

We now condition on the event $\{u,v\in\J\}$ and distinguish cases according to
the values of $A(u,r)$ and $A(v,r)$:

\begin{enumerate}
\item \textbf{$A(u,r)=A(v,r)=1$.}
By condition (3) in the definition of $\J$, this forces
\[
\widetilde B(u,r)=\widetilde B(v,r)=0,
\]
so the likelihood contribution is unchanged under the transposition $\tau$.

\item $A(u,r)=1$ and $A(v,r)=0$.
Again by condition (3), the event $A(u,r)=1$ implies
\[
\widetilde B(w,r)=0 \quad \text{for all } w\in\U,
\]
and in particular $\widetilde B(u,r)=\widetilde B(v,r)=0$.
Thus the contribution is invariant under swapping $u$ and $v$.

\item $A(u,r)=0$ and $A(v,r)=1$.
This case is symmetric to the previous one and yields the same conclusion.

\item $A(u,r)=A(v,r)=0$.
In this case, condition (3) imposes no further restriction.
However, the event $\{u,v\in\J,\ \sigma^*(u)=\sigma^*(v)\}$ is symmetric in $u$ and $v$,
and the model assigns identical parameters to the edges $(u,r)$ and $(v,r)$.
Therefore, the conditional distribution of
\[
(\widetilde B(u,r),\widetilde B(v,r))
\]
remains exchangeable under swapping $u$ and $v$, and the contribution
is invariant under the transposition $\tau$.
\end{enumerate}

Combining cases (1)--(4) and using independence across unordered pairs, the full conditional likelihood of
$(A,\widetilde B)$ is invariant under $\widetilde B\mapsto\widetilde B^\tau$.
\end{proof}

The resulting lower bound on recovery follows from the lemmas above:
\begin{theorem}\label{th:lower_SBM} Let $|\U| = n^{1-\alpha}$ for some $\alpha \in [0,1]$. If $\lambda s^2 < 1-\alpha$, no equivariant estimator can succeed with probability exceeding $1/2+o(1)$.

\end{theorem}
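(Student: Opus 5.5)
We argue by a symmetrization over transpositions of hard isolated vertices, combining Lemma~\ref{lem:j_size} with Lemma~\ref{lem:transposition_invariance} and permutation equivariance.

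\textbf{Step 1: two same-community vertices in $\J$.} Since $\lambda s^2 < 1-\alpha$, Lemma~\ref{lem:j_size} gives $|\J| = n^{1-\alpha-\lambda s^2+o(1)} \to \infty$ with probability $1-o(1)$. We need two vertices $u \neq v \in \J$ with $\sigma^*(u)=\sigma^*(v)$. The proof of Lemma~\ref{lem:i_size} applies verbatim to $\U\cap\C_1$ and to $\U\cap\C_2$. Each of these has size $\Theta(n^{1-\alpha})$, which we may assume, or else we simply use the pigeonhole principle on the two communities. So some community contains at least two, and in fact growing many, elements of $\J$. Let $E$ denote this event; then $\Prob{E}=1-o(1)$.

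\textbf{Step 2: equivariance plus invariance.} Fix a pair $u,v$ and let $\tau$ be their transposition. Since $\tau$ fixes $\R$, $\tau \in \Pi_\U$. By permutation equivariance,
\[
\hat\pi(A,B^\tau,\pi_\R,\sigma_\R)=\tau\circ\hat\pi(A,B,\pi_\R,\sigma_\R).
\]
By Lemma~\ref{lem:transposition_invariance}, conditional on $\{u,v\in\J\}$ and the revealed data, $(A,B)\stackrel{d}{=}(A,B^\tau)$. Hence
\[
\Prob{\hat\pi(A,B)=\pi^* \mid u,v\in\J}=\Prob{\hat\pi(A,B^\tau)=\pi^*\mid u,v\in\J}=\Prob{\tau\circ\hat\pi(A,B)=\pi^*\mid u,v\in\J}.
\]
The events $\{\hat\pi=\pi^*\}$ and $\{\hat\pi=\tau^{-1}\circ\pi^*\}$ are disjoint, because $\tau\circ\pi^*\neq\pi^*$ on $u$. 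Their conditional probabilities are equal, so each is at most $1/2$.

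There is one technical point here. $B^\tau$ corresponds to the model with truth $\tau\circ\pi^*$, so the success event must be tracked under relabeling. It is cleanest to phrase the argument as follows: the posterior of $\pi^*$ given the data puts equal mass on $\pi^*$ and $\tau\circ\pi^*$. This holds because $\pi^*$ is uniform and the likelihoods agree by the invariance computation. Consequently, no estimator, and in particular no equivariant one, can exceed success probability $1/2$ on that event.

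\textbf{Step 3: removing the conditioning.} The pair $(u,v)$ is data-dependent, which is the main obstacle. Lemma~\ref{lem:transposition_invariance} is stated for a fixed pair, but membership in $\J$ is not observable, since it depends on $\pi^*$. I would handle this by summing over fixed pairs. Let $E_{uv}=\{u,v\in\J\}$ for same-community pairs $u,v$. The posterior symmetry argument applies on each $E_{uv}$. Choose a measurable selection: on $E$, let $(u,v)$ be the lexicographically first same-community pair in $\J$. The events
\[
F_{uv}=E_{uv}\cap\{(u,v)\text{ is the first such pair}\}
\]
partition $E$. Each $F_{uv}$ is invariant under swapping $u$ and $v$, since the conditions defining $\J$ are symmetric and the likelihood factors are exchanged. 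Therefore the 1/2 bound holds conditionally on each $F_{uv}$.

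Summing over the partition gives
\[
\Prob{\hat\pi=\pi^*}\le \tfrac12\Prob{E}+\Prob{E^c}=\tfrac12+o(1),
\]
which proves Theorem~\ref{th:lower_SBM}. Verifying that $F_{uv}$, and not merely $E_{uv}$, is $\tau$-invariant is the delicate step. If it fails, I would fall back on averaging the bound over all pairs in $\J$, weighted by $1/\binom{|\J|}{2}$.
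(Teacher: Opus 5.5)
Your proposal is correct and follows the paper's proof. Lemma~\ref{lem:j_size} plus pigeonhole gives a same-community pair in $\J$, and Lemma~\ref{lem:transposition_invariance} with permutation equivariance makes $\{\hat\pi=\pi^*\}$ and $\{\hat\pi=\tau\circ\pi^*\}$ disjoint events of equal probability, hence the $1/2+o(1)$ bound. Your Step 3 partition by the lexicographically first pair handles the data-dependence of $(u,v)$ that the paper passes over, and the invariance you flag as delicate does hold, so you need no fallback: on $\{u,v\in\J\}$ the swap leaves every vertex's membership in $\J$ unchanged, because conditions (2) and (3) quantify only over the unrevealed set, which the swap maps onto itself, and condition (2) forces $A(w,u)=A(w,v)=0$, so condition (1) is unaffected.
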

\begin{proof} We note that when $|\U| = \Theta(n)$ then the bound reduces to the classical case of $\lambda s^2 < 1$ in \cite{CK17}.

Assume $\lambda s^2<1-\alpha$. By Lemma~\ref{lem:j_size}, with probability $1-o(1)$ we have $|\J|\to\infty$. In particular, there exist two distinct vertices $u\neq v$ in the same community contained in $\J$. Let $\tau$ be the within-community transposition swapping $u$ and $v$ and fixing all other vertices. Consider the alternative permutation $\pi' := \tau \circ \pi^*$, which preserves community labels and agrees with $\pi^*$ on $[n]\setminus \{u,v\}$.

Let $\hat\pi$ be any permutation-equivariant estimator of the true permutation $\pi^*$. By equivariance, for every realization over the transposition $\tau$
\[
\hat\pi(A,B^\tau,\pi_\R,\sigma_\R)
= \tau \circ \hat\pi(A,B,\pi_\R,\sigma_\R).
\]
Moreover, by Lemma~\ref{lem:transposition_invariance}  conditional on $(\pi_\R,\sigma_\R)$ and on the event
$\{u,v\in\J,\ \sigma^*(u)=\sigma^*(v)\}$, we have
\[
(A,B,\pi_\R,\sigma_\R) \;\stackrel{d}{=}\; (A,B^\tau,\pi_\R,\sigma_\R).
\]
Using the distributional symmetry above,
\begin{align*}
\Prob{\hat\pi=\pi^*}
&=\Prob{\hat\pi(A,B,\pi_\R, \sigma_\R)=\pi^*} \\
&=\Prob{\hat\pi(A,B^\tau,\pi_\R, \sigma_\R)=\pi^*} \\
&=\Prob{\tau\circ\hat\pi(A,B,\pi_\R, \sigma_\R)=\pi^*} \\
 &=\Prob{\hat\pi=\tau^{-1}\circ\pi^*}\\
 &=\Prob{\hat\pi=\tau\circ\pi^*},
\end{align*}
since $\tau^{-1}=\tau$. The events $\{\hat\pi=\pi^*\}$ and $\{\hat\pi=\tau\circ\pi^*\}$ are
disjoint because $u\neq v$, hence
\[
1 \geq\ \Prob{\hat\pi=\pi^*}+\Prob{\hat\pi=\tau\circ\pi^*}
\ =\ 2\Prob{\hat\pi=\pi^*}.
\]
Therefore $\Prob{\hat\pi=\pi^*}\le 1/2$ on the event $\mathcal{E}$ that a pair $u\neq v\in\J$ exists with $\sigma^*(u) = \sigma^*(v)$. Since $\mathcal{E}$ holds with probability $1-o(1)$ when $\lambda s^2<1-\alpha$, we conclude
\[
\Prob{\hat\pi=\pi^*}\ \le\ \tfrac12 + o(1),
\]
as claimed.
\end{proof}

\subsection{Upper Bound: Algorithmic Achievability}

We first state a bound on the success rate for exact graph matching using the Hungarian algorithm for the $\scsbm$, extending the bounds from \cite{Racz21}:

\begin{theorem}\label{th:upper_SBM}
Let $(A,B) \sim \scsbm(n,a,b,s,\R,\sigma^*,\pi^*)$ be defined with $|\U| = n^{1-\alpha}$ for some $\alpha \in [0,1]$. Then setting $\lambda = \frac{a+b}{2}$, if $\lambda s^2 > 1-\alpha + \eps$ for some $\eps > 0$, Algorithm \ref{alg:seeded_matching} returns an estimator $\hat{\pi}$ in polynomial time satisfying
\[
\Prob{\hat{\pi} = \pi^*} = 1 -o(1).
\]
\end{theorem}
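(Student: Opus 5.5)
Using Lemma~\ref{lem:bad}, I will show that with high probability the score matrix $\Score$ on $\U\times\U$ is \emph{diagonally dominant} with respect to the true matching. This means there is a threshold $\delta\log n$ such that every true pair $(u,\pi^*(u))$ has $\Score(u,\pi^*(u))>\delta\log n$, while every false pair $(u,v)$ with $v\neq\pi^*(u)$ has $\Score(u,v)<\delta\log n$. On this event, $\pi^*|_\U$ achieves total weight strictly larger than any other $\pi\in\Pi_\U$. Indeed, any $\pi\neq\pi^*$ on $\U$ replaces at least two true pairs by false pairs. Summing per-coordinate and pairing each changed coordinate of $\pi$ with the corresponding coordinate of $\pi^*$, each replaced term is strictly smaller than the term it replaces, since false scores are below and true scores above $\delta\log n$. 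The Hungarian step therefore returns $\pi^*|_\U$, and combining with the seeds gives $\hat\pi=\pi^*$. Polynomial running time is immediate: computing $\Score$ costs $O(n^3)$ and the Hungarian algorithm costs $O(|\U|^3)$.

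\textbf{Choice of threshold.} Fix the $\eps>0$ from the hypothesis $\lambda s^2>1-\alpha+\eps$. By the second part of Lemma~\ref{lem:bad}, for each $u\in\U$ and any $\delta>0$,
\[
\Prob{\Score(u,\pi^*(u))\le\delta\log n}\le n^{-\lambda s^2+O(\delta)}.
\]
Choose $\delta$ small enough that the $O(\delta)$ term is at most $\eps/2$. A union bound over the $|\U|=n^{1-\alpha}$ true pairs then gives failure probability at most $n^{1-\alpha-\lambda s^2+\eps/2}\le n^{-\eps/2}=o(1)$. For the false pairs, the first part of Lemma~\ref{lem:bad} with this same $\delta$ bounds each probability by $n^{-\delta\log n(1+o(1))}$, which is superpolynomially small. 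A union bound over at most $|\U|^2\le n^2$ pairs therefore still gives $o(1)$. Both events hold simultaneously with probability $1-o(1)$.

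\textbf{Main obstacle.} The delicate step is making sure Lemma~\ref{lem:bad} is applied legitimately and uniformly. Two points need care.
\begin{enumerate}
\item The $O(\delta)$ error term in the lower-tail bound must be uniform in $n$. This requires that the term $-\delta\log(\delta/\lambda s^2)$ be small as $\delta\to0$, which holds since $x\log x\to0$.
\item The true score must have the correct distribution. Conditional on $\sigma^*$, the score $\Score(u,\pi^*(u))$ is a sum of independent $\Bern(p s^2)$ variables over $r\in\R$, with $p$ depending on whether $\sigma^*(r)=\sigma^*(u)$. Since $|\R|=n-n^{1-\alpha}=n(1-o(1))$ and $\R$ is balanced, the mean is $(1-o(1))\lambda s^2\log n$.
\end{enumerate}
The $o(1)$ loss in the mean is absorbed into the $\eps/2$ slack. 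I would spell out the Chernoff bound for this mean directly if the lemma's statement, which is phrased as a stochastic domination, needs sharpening, since what is required here is a lower-tail bound, i.e.\ domination from below. For false pairs, the score is stochastically dominated by a binomial with mean $O(\log^2 n/n)$, so the upper tail is trivially fine. With both steps in place, the argument concludes as in the first paragraph.
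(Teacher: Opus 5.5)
Your proof is correct, but it takes a different route from the paper's.

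\textbf{The paper's route.} It works with whole permutations. It fixes a candidate $\hat\pi$ with $k$ misaligned vertices. It bounds the probability that $\sum_u\Score(u,\hat\pi(u))\ge\sum_u\Score(u,\pi^*(u))$ by $n^{-k\lambda s^2+o(1)}$, using Chernoff bounds on the summed false and true scores over those $k$ vertices. It then union-bounds over the at most $n^{k(1-\alpha)}$ such permutations and sums over $k$.

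\textbf{Your route.} You establish an entrywise separation of the score matrix: every true score lies above $\delta\log n$ and every false score below it. This uses only the marginal tails of Lemma~\ref{lem:bad}, with union bounds over $|\U|$ vertices and $|\U|^2$ pairs. Optimality of $\pi^*|_\U$ for the assignment problem is then deterministic. This is the same device the paper itself uses later, in Lemma~\ref{lem:score_gap_lp}.

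\textbf{What each buys.} Your route is more elementary and avoids two delicate points in the paper's argument.
\begin{enumerate}
\item You never need independence across misaligned vertices. That independence is shaky here, because consecutive terms along a cycle of $\hat\pi\circ(\pi^*)^{-1}$ share parent edges. For example, $\Score(u_i,\pi^*(u_{i+1}))$ and $\Score(u_{i+1},\pi^*(u_{i+2}))$ both depend on the edges $G(u_{i+1},\cdot)$.
\item You never need a comparison threshold that scales with $k$. The paper uses a fixed threshold $\eps'\log n$ for the sum of $k$ false scores. That threshold has to grow with $k$ for the union over $n^{k(1-\alpha)}$ permutations to close once $k\gg\log n$.
\end{enumerate}
Your separation also gives Theorem~\ref{th:greedy_SBM} for free, since every true pair precedes every false pair in the greedy order. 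What the paper's template offers is a direct certificate of global optimality that does not require per-vertex separability, so it would adapt to objectives that do not decompose entrywise. In this setting it yields the same threshold.

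Your handling of the $\delta\log(1/\delta)$ term, and your remark that the lower tail needs domination from below rather than above, are both correct.

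\textbf{A caveat shared with the paper.} The step $|\R|=(1-o(1))n$ requires $\alpha>0$. At $\alpha=0$ one has $\R=\emptyset$, all scores vanish, and the statement cannot hold as written.
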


\begin{proof}
Constructing all overlaps $\Score(u,v)=|\Nbr[A]{u}\cap\Nbr[B]{v}|$ takes
$O(|\U|\cdot|\R|)$ time, and the Hungarian algorithm takes $O(|\U|^3)$ time.

Let $\hat{\pi}$ have $k$ misaligned vertices for $2 \leq k \leq |\U|$. As $\hat{\pi}$ is returned by the Hungarian method, necessarily
\begin{equation}
  \sum_{u\in\U}\Score(u,\hat{\pi}(u))
\ge
\sum_{u\in\U}\Score(u,\pi^*(u)).
\label{eq:trace-ineq}  
\end{equation}

Crucially, for each misaligned $u$, the random variables
\[
\Score(u,\hat{\pi}(u)),\qquad \Score(u,\pi^*(u))
\]
are sums of $(1-o(1))n$ Bernoulli variables indexed by the seeds $\R$. Moreover, for distinct misaligned vertices $u_i$, the collections of Bernoulli variables
entering $\Score(u_i,\hat{\pi}(u_i))$ and $\Score(u_i,\pi^*(u_i))$ are disjoint and thus conditionally independent given $(\R,\pi^*)$. Hence each expression in \eqref{eq:trace-ineq} is a sum of $\Theta(k|\R|)$ independent Bernoulli variables  conditional on the revealed set $\R$ and the true permutation $\pi^*$. Let $u_1, \ldots u_k$ be the vertices on which $\hat{\pi}$ and $\pi^*$ disagree.

By a Chernoff bound on $\Theta(k|\R|)$ variables analogous to Lemma~\ref{lem:bad}, the error rate given $k$ misaligned vertices can be bounded for any $\eps' > 0$ by
\begin{align*}
\Prob{\sum_{u\in\U}\Score(u,\hat{\pi}(u))
\ge
\sum_{u\in\U}\Score(u,\pi^*(u))} & = \Prob{\sum_{i=1}^k\Score(u_i,\hat{\pi}(u_i))
\ge
\sum_{i=1}^k\Score(u_i,\pi^*(u_i))} \\
& \leq \Prob{\sum_{i=1}^k\Score(u_i,\hat{\pi}(u_i))> \eps'\log n}\\
&\quad + \ \Prob{\sum_{i=1}^k\Score(u_i,\pi^*(u_i)) < \eps'\log n} \\
& \leq n^{-\eps'\log n + o(\eps' \log n)}+n^{-k\lambda s^2 + o(\eps')}\\
&\quad= n^{-k\lambda s^2 + o(\eps')}\\
\end{align*}

Letting $\eps' \to 0$, gives an upper bound of $n^{-k\lambda s^2 + o(1)}$, allowing us to upper-bound the probability of seeing exactly $k$ errors by 
\[
\Prob{\hat{\pi}\text{ has exactly }k\text{ errors}}
\le
n^{-k\lambda s^{2}+o(1)}.
\]

There are $\binom{|\U|}{k}(k!-1)\le n^{k(1-\alpha)+o(k)}$ such permutations, so
\begin{align*}
\Prob{\hat{\pi}\neq\pi^*}
&\le
\sum_{k\ge 1} n^{k(1-\alpha)} n^{-k\lambda s^{2}+o(k)} \\
&=\sum_{k\ge 1} n^{-k(\lambda s^{2}-(1-\alpha))+o(k)}.
\end{align*}

If $\lambda s^{2}>(1-\alpha)+\eps$ then every term is at most $n^{-k\eps+o(k)}$, and
the series is dominated by its first term:
\[
\Prob{\hat{\pi}\neq\pi^*}=n^{-\eps + o(1)}=o(1).\qedhere
\]
\end{proof}

We now show that the greedy algorithm achieves the threshold for exact recovery in the almost fully seeded case.
\begin{theorem}\label{th:greedy_SBM}
Let $(A,B) \sim \scsbm(n,a,b,s,\R,\sigma^*,\pi^*)$ be defined  with $|\U| = n^{1-\alpha}$ for some $\alpha \in [0,1]$. Then setting $\lambda = \frac{a+b}{2}$, if $\lambda s^2 > (1-\alpha) + \eps$ for some $\eps > 0$, Algorithm \ref{alg:greedy_matching} returns an estimator $\hat{\pi}$ in polynomial time such that
\[
\Prob{\hat{\pi} = \pi^*} = 1 -o(1).
\]
\end{theorem}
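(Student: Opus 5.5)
The natural approach is a direct separation argument. Greedy returns $\pi^*$ on $\U$ whenever every true-pair score strictly exceeds every false-pair score. If $\min_{u\in\U}\Score(u,\pi^*(u)) > \max_{v\neq\pi^*(u)}\Score(u,v)$, this alone does not suffice: we need that the globally processed order never matches a false pair before the relevant true pairs. A clean sufficient condition is a threshold separation: there exists $\eps'>0$ such that every true pair has $\Score(u,\pi^*(u))>\eps'\log n$, while every false pair $(u,v)$ with $v\neq\pi^*(u)$ has $\Score(u,v)\le\eps'\log n$. Under this event, all true pairs are processed strictly before any false pair. The true pairs form a perfect matching on $\U\times\U$ with pairwise distinct endpoints, so greedy accepts each of them as it reaches them, regardless of ties among themselves. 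After that every vertex is matched, so no false pair is ever added and $\hat\pi=\pi^*$.

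The plan is then to bound the failure probability of the separation event by two union bounds using Lemma~\ref{lem:bad}.

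\textbf{False pairs.} There are at most $|\U|^2=n^{2(1-\alpha)}$ such pairs, each with $\Prob{\Score(u,v)>\eps'\log n}\le n^{-\eps'\log n+o(\eps'\log n)}$. This is superpolynomially small, so the union bound contributes $o(1)$ for any fixed $\eps'>0$.

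\textbf{True pairs.} There are $|\U|=n^{1-\alpha}$ of them, each with $\Prob{\Score(u,\pi^*(u))\le\eps'\log n}\le n^{-\lambda s^2+O(\eps')}$. The union bound gives $n^{1-\alpha-\lambda s^2+O(\eps')}\le n^{-\eps+O(\eps')}$. Choosing $\eps'$ small enough relative to $\eps$, so that the $O(\eps')$ term is at most $\eps/2$, makes this $o(1)$.

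The main obstacle is that the lower-tail bound in Lemma~\ref{lem:bad} carries an $O(\eps')$ loss in the exponent, where the constant hidden in $O(\cdot)$ comes from the term $\eps'\log(\eps'/\lambda s^2)$. I would handle it by noting that $\eps'\log(1/\eps')\to0$ as $\eps'\to0$, so a fixed small $\eps'$ depending only on $\eps,\lambda,s$ works. The second point to verify is that the same fixed $\eps'$ serves both tails; the upper-tail bound holds for every fixed $\eps'>0$, so this is fine.

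Running time is polynomial. Computing the scores takes $O(|\U|\,|\R|)$ time, and sorting the $|\U|^2$ pairs followed by a linear scan takes $O(|\U|^2\log|\U|)$.
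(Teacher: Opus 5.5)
Your proof is correct, but it takes a different route from the paper. The paper reasons about the output. It decomposes $\hat\pi\circ(\pi^*)^{-1}$ into $\ell$ disjoint cycles and observes that in each cycle the first vertex $u_j$ that greedy matches must satisfy $\Score(u_j,\hat\pi(u_j))\ge\Score(u_j,\pi^*(u_j))$, since both pairs are still available at that moment. It then bounds the probability of $\ell$ such vertices by $n^{\ell(1-\alpha)}\cdot n^{-\ell\lambda s^2+o(\ell)}$, using independence across cycles, and sums over $\ell$.

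You instead isolate one event on the score matrix alone: a global threshold $\eps'\log n$ separating every true score from every false one. On that event greedy is deterministically correct, and you bound its failure by two union bounds from Lemma~\ref{lem:bad}. This is shorter and avoids two loose points in the paper's argument:
\begin{itemize}
\item The partners $\hat\pi(u_i)$ are data-dependent. The paper nonetheless applies Lemma~\ref{lem:bad} and independence as if $\hat\pi$ were fixed, and counts only $\ell$-tuples of vertices, not the possible wrong partners. This is harmless because the false-pair tail is superpolynomial, but the paper does not say so.
\item You correctly note that the lower-tail loss is $O(\eps'\log(1/\eps'))$, not the $o(\eps')$ written there. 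You also fix $\eps'$ once in terms of $\eps$, rather than sending it to zero.
\end{itemize}
Your event also certifies Algorithm~\ref{alg:seeded_matching} immediately. A permutation differing from $\pi^*$ on $k$ vertices changes the total score by a sum of $k$ strictly negative terms. The paper's route does buy two things. It gives a geometric tail $n^{-\ell\eps+o(\ell)}$ on the number of error cycles. Its combinatorial step needs only row-wise comparisons, which is the natural condition when true-score scales vary across vertices and no single global threshold exists.

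Two small remarks. First, your opening claim that row-wise strict domination does not suffice is false. That condition is $\Score(u,\pi^*(u))>\max_{v\neq\pi^*(u)}\Score(u,v)$ for all $u$. Suppose $(u,v)$ were the first false pair accepted. Then $(u,\pi^*(u))$ was examined earlier, while $u$ was free, and rejected. So $\pi^*(u)$ was already taken by an earlier accepted pair, which must be a true pair and hence equal to $(u,\pi^*(u))$, a contradiction. This is exactly what the paper's cycle argument extracts. It does not affect your proof, since your threshold event implies the row-wise condition.

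Second, both arguments inherit from Lemma~\ref{lem:bad} the implicit requirement $|\R|=(1-o(1))n$, i.e.\ $\alpha>0$. At $\alpha=0$ the seed set is empty and all scores vanish, so the statement as written fails.
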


\begin{proof}
Computing all scores $\Score(u,v)$ takes $O(|\U|\cdot|\R|)$ time and sorting the multiset
$\{\Score(u,v)\colon u,v\in\U\}$ takes $O(|\U|^2\log|\U|)$, giving total runtime polynomial in~$n$.

Fix any output $\hat{\pi}\neq\pi^*$ and define the error permutation
\[
\rho := \hat{\pi} \circ (\pi^*)^{-1}.
\]
Then $\rho$ decomposes uniquely into $\ell$ disjoint nontrivial cycles,
\[
\rho = C_1\circ\cdots\circ C_\ell,
\qquad 1\leq \ell\leq |\U|/2,
\]
and $u\in\U$ is misaligned iff $u$ lies in one of the cycles.
We now show that each cycle $C_i$ contributes at least one vertex for which the greedy algorithm selects an incorrect pair before the correct pair becomes unavailable.

Fix a cycle $C_i$ and write it as
\[
C_i = (u_1,\ldots,u_k),
\qquad k\geq 2,
\]
so that $\rho(u_j)=u_{j+1}$ for $j=1,\ldots,k-1$ and $\rho(u_k)=u_1$.
Define
\[
v_j := \pi^*(u_j), \qquad \hat{\pi}(u_j)=v_{j+1}.
\]
Consider the moment when Algorithm~\ref{alg:greedy_matching} selects the first vertex from $C_i$.
Suppose the first such vertex is $u_j$.
At this time all $u_1,\ldots,u_k$ are unmatched, and all $v_1,\ldots,v_k$ are also unmatched, so both candidate pairs
\[
(u_j,v_{j+1}) = (u_j,\hat{\pi}(u_j))
\qquad\text{and}\qquad
(u_j,v_j) = (u_j,\pi^*(u_j))
\]
are feasible for matching.

Since the greedy algorithm processes edges in nonincreasing order of scores and chooses $(u_j,\hat{\pi}(u_j))$ at this moment, we must have
\begin{equation}\label{lower_score}
    \Score(u_j,\hat{\pi}(u_j)) \geq \Score(u_j,\pi^*(u_j)).
\end{equation}

Because cycles are disjoint, this argument applies independently to each $C_i$, yielding vertices $u_i\in C_i$ satisfying~\eqref{lower_score}.

Let $\Pi^\ell$ denote the set of estimators $\hat{\pi}$ such that $\hat{\pi}\circ(\pi^*)^{-1}$ has $\ell$ many distinct cycles. From the argument above, the event $\{\hat{\pi}\in \Pi^\ell\}$ implies the existence of distinct vertices
\[
u_1,\ldots,u_\ell\in\U
\]
such that
\[
\Score(u_i,\hat{\pi}(u_i)) \geq \Score(u_i,\pi^*(u_i)), \qquad i=1,\ldots,\ell.
\]

By Lemma \ref{lem:bad}, for each fixed $u\in\U$ and $\eps' > 0$, the score comparison above has probability
\begin{align*}
&\Prob{\Score(u,\hat{\pi}(u)) \geq \Score(u,\pi^*(u))}\\
& \leq \Prob{\Score(u,\hat{\pi}(u)) \geq \eps'\log n}+\Prob{\Score(u,\pi^*(u)) < \eps'\log n} \\
&= n^{-\eps'\log n + o(\eps' \log n)}+n^{-\lambda s^2 + o(\eps')}\\
&=
n^{-\lambda s^2 + o(\eps')}
\end{align*}
Taking a sequence of $\eps' \rightarrow 0$, we then get an upper bound of $n^{-\lambda s^2 + o(1)}$.

Next, for a fixed estimator $\hat{\pi}$ with $\ell$ cycles, the scores of different mismatched  $u_i$ involve disjoint vertex sets and are conditionally independent given $\R$. As a result, for any distinct $u_1,\ldots,u_\ell$,
\[
\Prob{\bigcap_{i=1}^\ell
\{\Score(u_i,\hat{\pi}(u_i)) \geq \Score(u_i,\pi^*(u_i))\}}
\leq
n^{-\ell\lambda s^2 + o(\ell)}.
\]

There are at most $|\U|^\ell = n^{\ell(1-\alpha)}$ possible $\ell$--tuples of distinct vertices.
Summing over them,
\begin{align*}
\Prob{\hat{\pi}\in\Pi^\ell}
&\leq n^{\ell(1-\alpha)} \cdot n^{-\ell\lambda s^2 + o(\ell)} = n^{-\ell(\lambda s^2 - (1-\alpha)) + o(\ell)}.
\end{align*}
Using the assumed gap $\lambda s^2 > (1-\alpha)+\eps$, this gives
\[
\Prob{\hat{\pi}\in\Pi^\ell}
\leq n^{-\ell\eps + o(\ell)}.
\]

Summing over all possible numbers of cycles:
\begin{align*}
\Prob{\hat{\pi}\neq\pi^*}
&= \sum_{\ell=1}^{|\U|/2} \Prob{\hat{\pi}\in\Pi^\ell} \leq \sum_{\ell=1}^{|\U|/2} n^{-\ell\eps + o(\ell)} = n^{-\eps + o(1)} = o(1).
\end{align*}

Therefore, Algorithm~\ref{alg:greedy_matching} recovers $\pi^*$ with probability $1-o(1)$.
\end{proof}

We show a lemma separating the objective function in Algorithm \ref{alg:lp_matching} into three sets of terms: edges between revealed nodes, edges across revealed and unrevealed nodes, and edges between unrevealed nodes.

\begin{lemma}\label{lem:lp_assignment_equivalence}
Let $(A,B)\sim \scsbm(n,a,b,s,\R,\sigma^*,\pi^*)$ and consider Algorithm~\ref{alg:lp_matching}. Then the objective can be rewritten as
\[
\|AD-DB\|_1
=
C - \Psi(D_{\U,\U})  + \Phi(D_{\U,\U}),
\]

where $D_{\U,\U}$ is restricted to the unrevealed nodes $\;\U$, $\Phi(D_{\U,\U})
=\sum_{u,v\in\U}\Bigl|(AD-DB)_{uv}\Bigr|$,
$\Psi(D_{\U,\U})
=2\sum_{u,v\in\U}D_{u v}\Score(u,v)$, $C > 0$ is a constant and $\Pi_{\U}$ denotes permutations on $[n]$ that fix $\R$.
\end{lemma}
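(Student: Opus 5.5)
We split the entries of $AD-DB$ according to the $\R/\U$ partition and use the seed-fixing constraints (iii) to compute each block explicitly. Throughout we work in seed-aligned coordinates: relabel the columns of $B$ by $\pi_\R$ so that the block $D_{\R,\R}$ is the identity, and $D_{\R,\U}=0$, $D_{\U,\R}=0$. The only free block is then $D_{\U,\U}$, which is doubly stochastic.

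\textbf{Computing the blocks.} For $r,r'\in\R$, $(AD-DB)_{rr'}=A(r,r')-B(\pi_\R(r),\pi_\R(r'))$. This does not depend on $D_{\U,\U}$, so its absolute values contribute a constant $C_1$. For $r\in\R$ and $v\in\U$,
\[
(AD-DB)_{rv}=\sum_{u\in\U}A(r,u)D_{uv}-B(\pi_\R(r),v),
\]
and symmetrically for the $(\U,\R)$ block. The $(\U,\U)$ block is, by definition, $\Phi(D_{\U,\U})$. In the standard case $C>0$ is clear, since $C_1$ is positive with high probability; if $C=0$ is allowed, only nonnegativity is needed.

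\textbf{Linearizing the cross blocks.} This step is the main obstacle. The cross-block terms are absolute values of differences, so we must show that their total equals a constant minus $\Psi$. Since $A(r,u)\in\{0,1\}$ and the column $D_{\cdot v}$ is a probability vector, $x:=\sum_u A(r,u)D_{uv}\in[0,1]$ and $y:=B(\pi_\R(r),v)\in\{0,1\}$. For such $x,y$,
\[
|x-y| = x+y-2xy .
\]
To see this, take $y=0$, which gives $|x|=x$, and $y=1$, which gives $1-x$. Summing over $r\in\R$ and $v\in\U$ yields
\[
\sum_{r,v}\Bigl(\sum_u A(r,u)D_{uv}\Bigr)+\sum_{r,v}B(\pi_\R(r),v)-2\sum_{u,v\in\U}D_{uv}\,|\Nbr[A]{u}\cap\Nbr[B]{v}| .
\]
The first sum equals $\sum_{u}\deg_{\R}^A(u)\sum_v D_{uv}=\sum_u \deg^A_\R(u)$ by the row-sum constraint, so it is a constant. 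The second sum is a constant. The last term is exactly $-\Psi(D_{\U,\U})$ restricted to this block.

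\textbf{Assembling the constant.} The $(\U,\R)$ block, using the column-sum constraint, gives another constant minus the same bilinear expression in $\Score$. Depending on the normalization, either the factor $2$ in $\Psi$ absorbs both blocks, or each block contributes $\sum D_{uv}\Score(u,v)$ with coefficient $2$. We will match the stated normalization by checking that the $(\U,\R)$ term is the transpose of the $(\R,\U)$ term under symmetry of $A$ and $B$, and adjust $C$ accordingly. Collecting $C_1$ and the degree sums into $C$ then gives $\|AD-DB\|_1=C-\Psi(D_{\U,\U})+\Phi(D_{\U,\U})$.
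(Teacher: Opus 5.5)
Your block computations are correct, but the final ``assembling'' step fails, and adjusting $C$ cannot fix it. Your own linearization shows that the $(\R,\U)$ block alone equals a constant minus $2\sum_{u,v\in\U}D_{uv}\Score(u,v)=\Psi(D_{\U,\U})$. The $(\U,\R)$ block $\sum_{u\in\U}\sum_{r\in\R}\bigl|A_{ur}-\sum_{v\in\U}D_{uv}B_{v,\pi_\R(r)}\bigr|$ linearizes the same way: row sums give $x\in[0,1]$, and column sums make the linear part constant. By symmetry of $A$ and $B$, its bilinear part is the \emph{same} term $-2\sum_{u,v}D_{uv}\Score(u,v)$. The two contributions add, so what your argument actually proves is
\[
\|AD-DB\|_1 = C-2\Psi(D_{\U,\U})+\Phi(D_{\U,\U}).
\]
Since $C$ may not depend on $D$, it cannot absorb the extra $-\Psi(D_{\U,\U})$. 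Your alternative that ``the factor $2$ in $\Psi$ absorbs both blocks'' is ruled out by your own computation, since a single block already produces that factor. A concrete check: take $n=3$, $\R=\{1\}$ with $\pi_\R(1)=1$, $A=B$ consisting of the single edge $\{1,2\}$, and $D_{\U,\U}$ with diagonal entries $t$ and off-diagonal entries $1-t$. Then $\Phi=0$, $\Psi=2t$, and $\|AD-DB\|_1=4-4t$, which is not of the form $C-2t$ for any constant $C$.

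So the statement as written is off by a factor of two. The paper's proof reaches $-\Psi$ only because it writes down the $(\U,\R)$ cross block alone and folds everything outside $\U\times\U$ into $C_0$. That includes the $(\R,\U)$ block $\sum_{r,v}\bigl|\sum_u A_{ru}D_{uv}-B_{\pi_\R(r),v}\bigr|$, which, as you correctly computed, depends on $D_{\U,\U}$. Otherwise your route is the paper's route: the block split, $|x-y|=x+y-2xy$ for $y\in\{0,1\}$, and row and column sums to identify the constants. Yours is the more careful of the two. You should conclude with the corrected identity rather than try to match the stated normalization. The extra factor is harmless for Theorem~\ref{th:lp_SBM}, where it only doubles the score gap in \eqref{eq:psi-gap}.
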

\begin{proof}
For any feasible $(D,Y)$, replacing $Y$ by $|AD-DB|$ entrywise preserves feasibility and does not increase the objective, so the LP objective equals
\[
f(D)=\|AD-DB\|_1.
\]
We partition indices into $\R$ and $\U$. Since $D_{r,\pi_\R(r)}=1$ for all $r\in\R$, terms involving only rows/columns in $\R$ are constant in $D_{\U,\U}$. Thus
\[
f(D)=C_0 + \sum_{u\in\U}\sum_{r\in\R}
\Bigl|A_{u r}-\sum_{v\in\U}D_{u v}B_{v,\pi_\R(r)}\Bigr|
+\Phi(D_{\U,\U}),
\]
where $C_0$ is constant and $\Phi(\cdot)$ collects the terms indexed only by $\U\times\U$.

Fix $u\in\U$ and $r\in\R$. Since $A_{ur}\in\{0,1\}$, for any $x\in[0,1]$ we have
\[
|A_{u r}-x| = A_{ur} + x -2A_{ur}x.
\]
Therefore
\begin{align*}
&\sum_{r\in\R}\Bigl|A_{u r}-\sum_{v\in\U}D_{u v}B_{v,\pi_\R(r)}\Bigr| = \deg_{A,\R}(u) +\sum_{v\in\U}D_{u v}\left(\deg_{B,\pi_\R}(v)
-2\Score(u,v)\right),
\end{align*}

and summing over $u\in\U$ gives
\[
\sum_{u\in\U}\sum_{r\in\R}
\Bigl|A_{u r}-\sum_{v\in\U}D_{u v}B_{v,\pi_\R(r)}\Bigr|
=
C_1 -2\sum_{u,v\in\U}D_{uv}\Score(u,v),
\]
with $C_1$ constant. Hence
\[
f(D)= C - \Psi(D_{\U,\U})  + \Phi(D_{\U,\U}),
\]
for a constant $C$, where $\Phi(D_{\U,\U})
=\sum_{u,v\in\U}\Bigl|(AD-DB)_{uv}\Bigr|$ and
$\Psi(D_{\U,\U})
=2\sum_{u,v\in\U}D_{u v}\Score(u,v)$.
\end{proof}

We now show a concentration result on the unrevealed component of the objective in Algorithm \ref{alg:lp_matching}.

\begin{lemma}\label{lem:unrevealed_concentration}
Let $D$ be a right--stochastic matrix, $D^*$ be the permutation corresponding to the true matching for $A,B$, and let $\Phi(D):= \sum_{u,v\in\U}\Bigl|(AD-DB)_{uv}\Bigr|$. We now select an arbitrary vertex $x \in \U$ and define $D^x(x, \cdot) = D^*(x,\cdot)$ and $D^x = D$ otherwise. Here $D^x$ is obtained from $D$ by replacing the $x$-th row with the
corresponding row of the true permutation matrix $D^*$. In addition, let $d,d^*$ be the rows of $x$ in $D,D^x$ respectively. Then with probability 1-o(1):
\[
\frac{|\Phi(D^x)-\Phi(D)|}{\|d-d^*\|_1} \leq C\left(\ddegA(x) + \max_v \ddegB(v)\right)
\]
for some constant $C > 0$.
\end{lemma}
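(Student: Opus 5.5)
The plan is to compute $\Phi(D^x)-\Phi(D)$ exactly, since $D$ and $D^x$ differ only in row $x$, and then bound it by the triangle inequality. The obstacle is that $\Phi$ is not linear in $D$. However, $|\cdot|$ is $1$-Lipschitz, so
\[
|\Phi(D^x)-\Phi(D)| \le \sum_{u,v\in\U}\bigl|(A\Delta-\Delta B)_{uv}\bigr|, \qquad \Delta := D^x - D .
\]
This reduces the claim to bounding the $\ell_1$ mass of the linear map $\Delta\mapsto A\Delta-\Delta B$ restricted to $\U\times\U$. Here $\Delta$ is supported on the single row $x$, and that row equals $d^*-d$ (restricted to $\U$ columns, since seed rows and columns are fixed).

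Next I would expand the two terms separately. For $\Delta B$, only row $x$ is nonzero, and it equals $\sum_{w}(d^*-d)_w B_{w,\cdot}$. Its $\ell_1$ norm over columns $v\in\U$ is therefore at most $\sum_w |d^*_w-d_w|\,\ddegB(w)$, which is at most $\|d-d^*\|_1\max_w \ddegB(w)$. For $A\Delta$, the $(u,v)$ entry is $A_{ux}(d^*-d)_v$, so summing over $u,v\in\U$ gives $\ddegA(x)\,\|d-d^*\|_1$. Adding the two bounds yields the claim with $C=1$. The bound is in fact deterministic, so the probability $1-o(1)$ qualifier is not even needed at this step. I would state it that way, and note that it becomes useful once combined with the high-probability bounds $\ddegA(x), \max_v\ddegB(v)=O(1)$ (or $O(\log n)$ crude) on unrevealed degrees from Chernoff plus a union bound.

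The only genuine points of care are the following. First, I must check that $D^x$ is well defined as a right-stochastic matrix, which holds since $d^*$ is a probability vector. Second, I must check that the seed-fixing constraints force $\Delta$ to vanish on $\R$ columns, because row $x\in\U$ of any feasible $D$ has zero mass on the seed images $\pi_\R(\R)$. Third, I must interpret $\ddegA,\ddegB$ as degrees into $\U$, so that the sums range only over $\U$. If one instead wants a bound in terms of full degrees, the same computation goes through with larger constants.

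I expect the main obstacle to be this bookkeeping of which index sets appear, rather than any probabilistic estimate. The nonlinearity of $\Phi$ is handled entirely by the reverse triangle inequality.
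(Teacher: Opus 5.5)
Your proposal is correct and takes essentially the same route as the paper. Both arguments use $\bigl||a|-|b|\bigr|\le|a-b|$ entrywise together with the fact that $D-D^x$ is supported on row $x$, so the $A(D-D^x)$ part contributes $\ddegA(x)\|d-d^*\|_1$ and the $(D-D^x)B$ part contributes at most $\max_w\ddegB(w)\,\|d-d^*\|_1$. The paper differs only cosmetically: it splits rows $u\neq x$ from $u=x$ and carries a vacuous $A_{xx}$ term, which leads it to $C=2$ on a degree-concentration event. Your organization correctly gives the deterministic bound with $C=1$.
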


\begin{proof}
We first expand $\Phi(D),\Phi(D^x)$ to get:
\begin{align*}
\Phi(D) &= \sum_{u,v \in \U} |(AD-DB)_{uv}|= \sum_{u,v \in \U} \Bigl| \sum_{w } (A_{uw}D_{wv} - D_{uw}B_{wv})\Bigr|.    
\end{align*}

\begin{align*}
\Phi(D^x) &=  \sum_{u,v \in \U} |AD^x-D^xB|_{uv} = \sum_{u,v \in \U} \Bigl| \sum_{w } (A_{uw}D^x_{wv} - D^x_{uw}B_{wv})\Bigr|.    
\end{align*}

We then split $ \Phi(D)- \Phi(D^x)$ into distinct rowwise cases of $u = x$ and $u \neq x$.
\begin{align*}
    \Phi(D)- \Phi(D^x)  &=  \sum_{u,v \in \U} \left(|(AD-DB)_{uv}| -|AD^x-D^xB|_{uv}\right) \\
    &= \sum_{u \neq x, v \in \U} \left(|(AD-DB)_{uv}| -|AD^x-D^xB|_{uv}\right) \\
    &\quad + \sum_{u = x, v \in \U} \left(|(AD-DB)_{uv}| -|AD^x-D^xB|_{uv}\right)
\end{align*}

For $u\neq x$, we note that
\begin{align*}
&\sum_{u \neq x}\sum_{v \in \U}
\Bigl(|(AD-DB)_{uv}|-| (AD^x-D^xB)_{uv}|\Bigr) \\
&= \sum_{u \neq x}\sum_{v \in \U}
\Bigl(|(AD)_{uv}-(DB)_{uv}|
      -|(AD^x)_{uv}-(D^xB)_{uv}|\Bigr) \\
&\le \sum_{u \neq x}\sum_{v \in \U}
\Bigl(|(AD)_{uv}-(AD^x)_{uv}|
      +|(DB)_{uv}-(D^xB)_{uv}|\Bigr) \\
&= \sum_{u \neq x}\sum_{v \in \U}
\Bigl(|(A(D-D^x))_{uv}|
      +|((D-D^x)B)_{uv}|\Bigr) \\
&= \sum_{u \neq x}\sum_{v \in \U}
\Bigl(\Bigl|\sum_{w} A_{uw}(D-D^x)_{wv}\Bigr| +\Bigl|\sum_{w} (D-D^x)_{uw}B_{wv}\Bigr|\Bigr) \\
&= \sum_{u \neq x}\sum_{v \in \U}
\Bigl(|A_{ux}(D-D^x)_{xv}|
      +|(D-D^x)_{ux}B_{xv}|\Bigr) \\
&\le \sum_{u \neq x}\sum_{v \in \U}
A_{ux}\,|D_{xv}-D^x_{xv}| \\
&\le \sum_{u \in \U} A_{ux}\,\|d-d^*\|_1 \\
&= \ddegA(x)\,\|d-d^*\|_1 .
\end{align*}

Then for $u=x$, we get that
\begin{align*}
&\sum_{v \in \U}
\Bigl(|(AD-DB)_{xv}|-| (AD^x-D^xB)_{xv}|\Bigr) \\
&\le \sum_{v \in \U}
\Bigl(|(AD)_{xv}-(AD^x)_{xv}|
      +|(DB)_{xv}-(D^xB)_{xv}|\Bigr) \\
&= \sum_{v \in \U}
\Bigl(|(A(D-D^x))_{xv}|
      +|((D-D^x)B)_{xv}|\Bigr) \\
&= \sum_{v \in \U}
\Bigl(\Bigl|\sum_{w} A_{xw}(D-D^x)_{wv}\Bigr| +\Bigl|\sum_{w} (D-D^x)_{xw}B_{wv}\Bigr|\Bigr) \\
&\le \sum_{v \in \U}\sum_{w} A_{xw}\,|(D-D^x)_{wv}|
   +  \sum_{v \in \U}\sum_{w} |(D-D^x)_{xw}|\,B_{wv} \\
&= \sum_{v \in \U} A_{xx}\,|D_{xv}-D^x_{xv}|
   +  \sum_{w} |D_{xw}-D^x_{xw}|\sum_{v \in \U} B_{wv} \\
&= A_{xx}\,\|d-d^*\|_1
   +  \sum_{w} |D_{xw}-D^x_{xw}|\,\ddegB(w) \\
&\le A_{xx}\,\|d-d^*\|_1
   +  \|d-d^*\|_1 \max_{w}\ddegB(w) \\
&\le \|d-d^*\|_1\bigl(1+\max_{w}\ddegB(w)\bigr).
\end{align*}

As $\ddegA(u), \ddegB(u) = \Theta(|\U|a\log n/n) = \Theta(n^{-\alpha}\log n)$ with probability $1-o(1)$, we can choose $C = 2$ on the degree-concentration event to get
\[
\frac{|\Phi(D^x)-\Phi(D)|}{\|d-d^*\|_1} \leq C \left(\ddegA(x) + \max_v \ddegB(v)\right).
\]
as desired.
\end{proof}

Next, we show a generalization of Lemma \ref{lem:bad} comparing convex combinations of scores.

\begin{lemma}\label{lem:score_gap_lp} Let $(A,B) \sim \scsbm(n,a,b,s,\R,\sigma^*,\pi^*)$ be defined with $|\U| = n^{1-\alpha}$ for some $\alpha \in [0,1]$. Fix $x\in\U$ and write $v^*=\pi^*(x)$.
For any probability vector $d=(d_v)_{v\in\U}$ supported on $\U\setminus\{v^*\}$, define
\[
S_d(x):=\sum_{v\in\U} d_v \Score(x,v).
\]
Then there exists a constant $c_0>0$ such that
\[
\Prob{S_d(x)\leq \Score(x,v^*) - c_0\log n}
\geq 1-n^{-\lambda s^2+o(1)}
\]
uniformly over all such $d$.
Consequently, if $\lambda s^2 > 1-\alpha$, the inequality above holds simultaneously for all $x\in\U$ and all such $d$ with probability $1-o(1)$.
\end{lemma}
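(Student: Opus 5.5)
The plan is to reduce the statement, which is uniform over all probability vectors $d$ supported on $\U\setminus\{v^*\}$, to two events that do not depend on $d$. By convexity,
\[
S_d(x)=\sum_{v\neq v^*} d_v\Score(x,v)\le \max_{v\in\U\setminus\{v^*\}}\Score(x,v).
\]
So it suffices to show that, with probability $1-n^{-\lambda s^2+o(1)}$, two things hold at once. First, $\max_{v\neq v^*}\Score(x,v)\le \eps\log n$. Second, $\Score(x,v^*)\ge 2\eps\log n$, for a suitably small constant $\eps>0$. Taking $c_0=\eps$ then gives $S_d(x)\le \Score(x,v^*)-c_0\log n$ for every admissible $d$ simultaneously. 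Uniformity in $d$ comes for free, because $d$ enters only through this deterministic convexity step.

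\textbf{Step 1 (false candidates).} By the first part of Lemma~\ref{lem:bad}, each $v\neq v^*$ satisfies $\Prob{\Score(x,v)\ge \eps\log n}\le n^{-\eps\log n+o(\eps\log n)}$. A union bound over the at most $n^{1-\alpha}$ choices of $v$ keeps this superpolynomially small, hence negligible against $n^{-\lambda s^2}$.

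\textbf{Step 2 (true candidate).} The second part of Lemma~\ref{lem:bad} only gives $\Prob{\Score(x,v^*)\le 2\eps\log n}\le n^{-\lambda s^2+O(\eps)}$. That is $n^{-\lambda s^2+o(1)}$ only if $\eps$ is allowed to shrink.

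This is the main obstacle: the constant $c_0$ must be fixed, yet the lower tail exponent degrades by $O(\eps)$. I would handle it in two ways.

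\begin{itemize}
\item For the ``consequently'' statement, I only need a fixed exponent strictly above $1-\alpha$. Given $\lambda s^2>1-\alpha$, choose $\eps$ so small that the $O(\eps)$ loss in the Chernoff exponent, namely $\eps-\eps\log(\eps/\lambda s^2)$ from the proof of Lemma~\ref{lem:bad}, is less than $(\lambda s^2-(1-\alpha))/2$. Then fix $c_0=\eps$.
\item For the displayed per-vertex bound, I would read $o(1)$ as the quantity $O(\eps)$ and then send $\eps\to0$ along a sequence, exactly as the paper does in the proofs of Theorems~\ref{th:upper_SBM} and~\ref{th:greedy_SBM}. I would state this interpretation explicitly.
\end{itemize}

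\textbf{Step 3 (union over $x$).} Combining Steps 1 and 2 gives a failure probability of $n^{-\lambda s^2+O(\eps)}$ for each fixed $x$. A union bound over the $|\U|=n^{1-\alpha}$ choices of $x$ yields total failure probability $n^{1-\alpha-\lambda s^2+O(\eps)}$. By the choice of $\eps$ in Step 2, this is $o(1)$. Since Step 1 already holds simultaneously for all $v$, the inequality holds for all $x$ and all $d$ with probability $1-o(1)$.
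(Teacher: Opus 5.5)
Your proposal is correct and is essentially the paper's proof: you bound $S_d(x)$ by $\max_{v\neq v^*}\Score(x,v)$, control that maximum with the upper-tail half of Lemma~\ref{lem:bad} plus a union bound over $v$, control $\Score(x,v^*)$ with the lower-tail half, and take a union bound over $x$. Your treatment of the exponent loss is more careful than the paper's. Its suggested constants $\eps_2=\tfrac34\lambda s^2$, $\eps_1=\tfrac12\lambda s^2$ leave a loss of order $\lambda s^2$, not $o(1)$. Moreover, for any fixed $c_0$ the failure probability is at least $\Prob{\Score(x,v^*)<c_0\log n}=n^{-\lambda s^2+\delta(c_0)+o(1)}$ with $\delta(c_0)>0$, so your reinterpretation of the displayed bound is genuinely needed. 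Choosing the constant small relative to $\lambda s^2-(1-\alpha)$ is exactly what the simultaneous statement requires; just evaluate the loss at your threshold $2\eps$ rather than at $\eps$.
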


\begin{proof}
Fix $x\in\U$ and write $v^*=\pi^*(x)$.  Choose constants
\[
0<\eps_1<\eps_2<\lambda s^2,
\]
to be specified below, and consider the two events
\begin{align*}
&E_{\mathrm{true}}(x):=\{\Score(x,v^*)\ge \eps_2 \log n\}, \\
&E_{\mathrm{false}}(x):=\Big\{\max_{v\in\U\setminus\{v^*\}}\Score(x,v)\le \eps_1 \log n\Big\}.
\end{align*}

By Lemma~\ref{lem:bad} for the true matching $v^*$ with $\eps=\eps_2$,
\[
\Prob{E_{\mathrm{true}}(x)^c}
=\Prob{\Score(x,v^*)\le \eps_2\log n}
\le n^{-\lambda s^2+O(\eps_2)}.
\]

For any false match $v\neq v^*$, Lemma~\ref{lem:bad} with $\eps=\eps_1$ gives
\[
\Prob{\Score(x,v)\ge \eps_1\log n}
\le n^{-\eps_1\log n+o(\eps_1\log n)}.
\]
Taking a union bound over the at most $|\U|\le n^{1-\alpha}$ false candidates yields
\begin{align*}
\Prob{E_{\mathrm{false}}(x)^c}
&\le \sum_{v\in\U\setminus\{v^*\}} \Prob{\Score(x,v)\ge \eps_1\log n}\\
&\le |\U|\cdot n^{-\eps_1\log n+o(\eps_1\log n)} \\
&= n^{1-\alpha-\eps_1\log n+o(\eps_1\log n)}
\end{align*}

On the event $E_{\mathrm{true}}(x)\cap E_{\mathrm{false}}(x)$, for any probability vector $d=(d_v)_{v\in\U}$ supported on $\U\setminus\{v^*\}$ we have
\[
S_d(x)=\sum_{v\in\U} d_v \Score(x,v)\le \max_{v\neq v^*}\Score(x,v)\le \eps_1\log n,
\]
and simultaneously $\Score(x,v^*)\ge \eps_2\log n$. Hence
\[
S_d(x)\le \Score(x,v^*)-(\eps_2-\eps_1)\log n.
\]
Therefore the desired inequality holds with $c_0:=\eps_2-\eps_1>0$, and moreover
\begin{align*}   
&\Prob{S_d(x)\le \Score(x,v^*)-c_0\log n} \\
&\qquad \ge 1-\Prob{E_{\mathrm{true}}(x)^c}-\Prob{E_{\mathrm{false}}(x)^c} \\
&\qquad=1-n^{-\lambda s^2+o(1)},
\end{align*}

uniformly over all such $d$.

Finally, a union bound over $x\in\U$ gives a failure probability at most
\[
|\U|\cdot n^{-\lambda s^2+o(1)}
= n^{1-\alpha-\lambda s^2+o(1)},
\]
which is $o(1)$ when $\lambda s^2>1-\alpha$.  (For concreteness one may take $\eps_2=\tfrac34\lambda s^2$ and $\eps_1=\tfrac12\lambda s^2$, so $c_0=\tfrac14\lambda s^2$.)
\end{proof}

We use the lemmas above to show that Algorithm \ref{alg:lp_matching} achieves the threshold for exact recovery.

\begin{theorem}\label{th:lp_SBM}
Let $(A,B) \sim \scsbm(n,a,b,s,\R,\sigma^*,\pi^*)$ be defined  with $|\U| = n^{1-\alpha}$ for some $\alpha \in [0,1]$. Then setting $\lambda = \frac{a+b}{2}$, if $\lambda s^2 > (1-\alpha) + \eps$ for some $\eps > 0$, Algorithm \ref{alg:lp_matching} returns an estimator $\hat{\pi}$ in polynomial time such that
\[
\Prob{\hat{\pi} = \pi^*} = 1 -o(1).
\]
\end{theorem}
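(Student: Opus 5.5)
The strategy is to show that, on a high-probability event, the true permutation matrix $D^*$ is the unique optimizer of the LP restricted to its $\U\times\U$ block. Then $W=D^*_{\U,\U}$ is a permutation matrix, and the Hungarian projection returns $\pi^*$ on $\U$. Polynomial running time is immediate: the LP has $O(n^2)$ variables and constraints, and the projection costs $O(|\U|^3)$.

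\textbf{Reduction to the $\U\times\U$ block.} By Lemma~\ref{lem:lp_assignment_equivalence}, every feasible $D$ satisfies
\[
f(D)=C-\Psi(D_{\U,\U})+\Phi(D_{\U,\U}),
\]
so only the bistochastic block $D_{\U,\U}$ matters. Let $D$ be any feasible point whose block differs from $D^*_{\U,\U}$. The plan is a row-by-row exchange argument. Pick a row $x$ with $d:=D(x,\cdot)\neq D^*(x,\cdot)$ and replace it by the true row, producing $D^x$ as in Lemma~\ref{lem:unrevealed_concentration}. Write $d=(1-\theta)e_{v^*}+\theta \tilde d$, where $\tilde d$ is a probability vector supported on $\U\setminus\{v^*\}$, so that $\|d-d^*\|_1=2\theta$. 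Then
\[
\Psi(D^x)-\Psi(D)=2\theta\bigl(\Score(x,v^*)-S_{\tilde d}(x)\bigr)\ge 2\theta c_0\log n
\]
by Lemma~\ref{lem:score_gap_lp}, simultaneously for all $x$ and $\tilde d$ with probability $1-o(1)$, using $\lambda s^2>1-\alpha$. By Lemma~\ref{lem:unrevealed_concentration},
\[
|\Phi(D^x)-\Phi(D)|\le 2\theta C\bigl(\ddegA(x)+\max_v\ddegB(v)\bigr).
\]
Degrees inside $\U$ are $\Bin(n^{1-\alpha},\Theta(\log n/n))$. A Chernoff bound and a union bound over $\U$ show they are $O(1)$ (indeed at most a constant depending on $\alpha$) with probability $1-o(1)$, which is $o(\log n)$. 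Hence $f(D^x)<f(D)$ whenever $\theta>0$.

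\textbf{Main obstacle.} $D^x$ is only right-stochastic, so it is not feasible for the LP, and the single-row exchange does not directly contradict optimality. I would handle this by summing the exchange over all rows, which amounts to a telescoping interpolation from $D$ to $D^*$ through right-stochastic matrices. Lemma~\ref{lem:unrevealed_concentration} is stated for right-stochastic $D$, so each step is covered. This gives
\[
f(D^*)-f(D)\le -\sum_x 2\theta_x\bigl(c_0\log n-C'\bigr)<0,
\]
and the endpoint $D^*$ is feasible, so every feasible $D\neq D^*$ is strictly suboptimal.

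Some care is needed at two points. First, the lemmas must bound the modification of one row given the already-modified earlier rows. The degree bound is deterministic on the concentration event, and the score gap is uniform over $\tilde d$, so both hold for arbitrary intermediate matrices. Second, the rows in $\R$ are fixed, which is consistent with the constraints. It then remains to intersect the high-probability events from Lemma~\ref{lem:score_gap_lp} and degree concentration, conclude that $D^*$ is the unique LP optimum, and read off $\hat\pi=\pi^*$.
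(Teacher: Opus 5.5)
Your proposal is correct and follows essentially the same route as the paper. Both arguments do a row-by-row exchange toward $D^*$: the $\Psi$ gain comes from Lemma~\ref{lem:score_gap_lp}, the $\Phi$ change is controlled by Lemma~\ref{lem:unrevealed_concentration} plus degree concentration, and your telescoping sum is the paper's chain $f(D)>f(D^{x_1})>\cdots>f(D^*)$. Your bookkeeping is slightly cleaner than the paper's: writing $d=(1-\theta)e_{v^*}+\theta\tilde d$ correctly applies the score-gap lemma to a vector supported off $v^*$, and the $O(1)$ bound on degrees into $\U$ is the right one. When you write it up, telescope $-\Psi+\Phi$ rather than $f$ itself, because the decomposition in Lemma~\ref{lem:lp_assignment_equivalence} uses the column-sum constraints and so holds only at the feasible endpoints $D$ and $D^*$.
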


\begin{proof}

By Lemma~\ref{lem:lp_assignment_equivalence}, for any feasible $D$ with the seed rows fixed the LP objective is
\[
f(D)
:= \|AD-DB\|_1
= C - \Psi(D_{\U,\U}) + \Phi(D_{\U,\U}),
\]
where $C>0$ is a constant independent of $D_{\U,\U}$,
\begin{align*}
\Psi(D_{\U,\U}) = 2\sum_{u,v\in\U} D_{uv}\Score(u,v), \qquad \Phi(D_{\U,\U}) = \sum_{u,v\in\U} \bigl|(AD-DB)_{uv}\bigr|.  
\end{align*}

So minimizing $f(D)$ is equivalent to balancing the revealed-unrevealed interaction term $-\Psi$ (which prefers large scores) against the within-unrevealed interaction term $\Phi$.

Let $D^*$ denote the permutation matrix associated with the true matching $\pi^*$ (with the seed rows fixed). Our goal is to show that, with high probability, every minimizer $D$ of $f$ has $D_{\U,\U}=D^*_{\U,\U}$; then Algorithm~\ref{alg:lp_matching} recovers $\pi^*$.

Fix any feasible bistochastic $D$ (respecting the seed constraints) and any vertex $x\in\U$. Let
\[
d := D_{x,\cdot}, \qquad d^* := D^*_{x,\cdot} = e_{\pi^*(x)}.
\]
Define $D^x$ to be the matrix obtained from $D$ by replacing the $x$-th row with $d^*$.Note that $D^x$ need not be feasible, since the column-sum constraints on $\U$ may be violated. 

We compare $f(D)$ to $f(D^x)$ for each $x\in\U$ and show that whenever $d\neq d^*$, then $f(D)>f(D^x)$. Iterating this comparison over all $x\in\U$ yields a chain of inequalities
\[
f(D) > f(D^{x_1}) > f(D^{x_2}) > \cdots > f(D^*),
\]
where $D^*$ is the true permutation matrix. Thus, although the intermediate matrices $D^{x_i}$ need not be feasible, they serve only to certify strict descent of the objective toward the feasible point $D^*$. A direct computation using Lemma~\ref{lem:lp_assignment_equivalence} gives
\begin{align*}
f(D) - f(D^x)
&= \bigl[C - \Psi(D_{\U,\U}) + \Phi(D_{\U,\U})\bigr] - \bigl[C - \Psi(D^x_{\U,\U}) + \Phi(D^x_{\U,\U})\bigr] \\
&= \bigl[\Psi(D^x_{\U,\U}) - \Psi(D_{\U,\U})\bigr] + \bigl[\Phi(D_{\U,\U}) - \Phi(D^x_{\U,\U})\bigr].
\end{align*}
We now bound the $\Psi$ and $\Phi$ terms separately.

By the definition of $\Psi$,
\begin{align*}
\Psi(D^x_{\U,\U}) - \Psi(D_{\U,\U})
&= 2\sum_{v\in\U} (D^x_{xv} - D_{xv}) \Score(x,v) = 2\Bigl[\Score(x,\pi^*(x)) - \sum_{v\in\U} d_v \Score(x,v)\Bigr].
\end{align*}
Conditioned on the seeds and their images, $\Score(x,\pi^*(x))$ is a sum of $|\R|$ independent Bernoulli variables with mean of order $\lambda s^2$ per seed, and each $\Score(x,v)$ for $v\neq\pi^*(x)$ is stochastically smaller, exactly as in the proof of Theorem~\ref{th:upper_SBM}. As $\lambda s^2 > 1-\alpha + \eps$, by Lemma~\ref{lem:score_gap_lp}, there exists a constant $c_0>0$ such that with probability $1-o(1)$,
\[
\sum_{v\in\U} d_v \Score(x,v)
\leq\Score(x,\pi^*(x)) - c_0 \log n
\]
simultaneously for all probability vectors $d$ supported on $\U\setminus\{\pi^*(x)\}$ and all $x\in\U$. On this event we obtain
\begin{equation}\label{eq:psi-gap}
\Psi(D^x_{\U,\U}) - \Psi(D_{\U,\U})
\geq 2c_0 \log n \|d-d^*\|_1.
\end{equation}

By Lemma~\ref{lem:unrevealed_concentration},
\[
\frac{\bigl|\Phi(D^x_{\U,\U}) - \Phi(D_{\U,\U})\bigr|}{\|d-d^*\|_1}
\leq C\Bigl(\ddegA(x) + \max_{v\in\U}\ddegB(v)\Bigr)
\]
for some constant $C>0$, with probability $1-o(1)$ (uniformly over all $x$ and all right--stochastic $D$).
Standard degree concentration in the sparse SBM ensures that there exists $c_1>0$ such that, with probability $1-o(1)$,
\[
\ddegA(x) + \max_{v\in\U}\ddegB(v)
\leq c_1 n^{-\alpha}\log n
\quad\text{for all }x\in\U.
\]
We now justify the (loose) upper bound on the maximum degree term, allowing slack of order
$n^{\alpha/2}\log n$.

Fix $u\in\U$ and write
\[
d_A(u):=\sum_{v\in[n]}A(u,v).
\]
Since $A$ is an SBM with parameters $(a,b)$, we have
\begin{align*}
&\Ex{d_A(u)}=\tfrac{a+b}{2}\log n=\lambda\log n,\quad \sum_{v\in[n]}\Var{A(u,v)} \le \Ex{d_A(u)}=\lambda\log n,
\end{align*}
where $\lambda=(a+b)/2$.

By Bennett's inequality, for any $x>0$ and $h(x)=(1+x)\log(1+x)-x$,
\begin{align*}
\Prob{d_A(u)\ge \Ex{d_A(u)}+t} &\quad \le \exp\!\left(
-\Big(\sum_{v\in[n]}\Var{A(u,v)}\Big)\,
h\!\left(\frac{t}{\sum_{v\in[n]}\Var{A(u,v)}}\right)
\right)\\
&\quad \le \exp\!\left(
-\lambda\log n\cdot h\!\left(\frac{t}{\lambda\log n}\right)
\right).
\end{align*}
We then take $t := n^{\alpha/2}\log n$. Using the asymptotic $h(x)\ge \tfrac12 x\log(1+x)$ for all sufficiently large $x$,
we obtain for all large $n$,
\begin{align*}
\lambda\log n\cdot h\!\left(\frac{t}{\lambda\log n}\right) 
&\geq
\lambda\log n \cdot \frac12\cdot \frac{n^{\alpha/2}}{\lambda}\,
\log\!\Bigl(1+\frac{n^{\alpha/2}}{\lambda}\Bigr) \\
&\ge
c\, n^{\alpha/2}(\log n)^2
\end{align*}
for some constant $c>0$. Hence for each fixed $u\in\U$,
\begin{align*}
\Prob{d_A(u)\ge (\lambda + n^{\alpha/2})\log n}
\le \exp\big(-cn^{\alpha/2}(\log n)^2\big).
\end{align*}
Applying a union bound over $u\in\U$ (with $|\U|=n^{1-\alpha}$) yields
\begin{align*}
&\Prob{\max_{u\in\U} d_A(u)\ge \lambda\log n + n^{\alpha/2}\log n} \\
&\quad\le
n^{1-\alpha}\exp\!\big(-c\,n^{\alpha/2}(\log n)^2\big)
=o(1).
\end{align*}
The same bound holds for $B$, and therefore with probability $1-o(1)$,
\begin{align*}
&\max_{u\in\U} d_A(u)\le (\lambda+n^{\alpha/2})\log n \\
&
\max_{v\in\U} d_B(v)\le (\lambda+n^{\alpha/2})\log n.
\end{align*}

Recalling that $\ddegA(x)$ and $\ddegB(v)$ denote degrees into $\U$ normalized by $|\R|=n^{1-\alpha}$,
we have $\ddegA(x)\le d_A(x)/|\R|$ and $\ddegB(v)\le d_B(v)/|\R|$, so on the same event, for all $x \in \U$,
\begin{align*}
\ddegA(x) + \max_{v\in\U}\ddegB(v)
&\le
\frac{2(\lambda+n^{\alpha/2})\log n}{|\R|}\\
&=
2(\lambda+n^{\alpha/2})\,n^{\alpha-1}\log n.
\end{align*}
Equivalently, setting $c_1:=2(\lambda+1)$ and using $n^{\alpha/2}\le n^\alpha$ for $\alpha\in[0,1]$,
we may write the convenient slack form
\[
\ddegA(x) + \max_{v\in\U}\ddegB(v)
\le
c_1\,n^{-\alpha/2}\log n
\]
holding for all $x\in \U$ with probability $1-o(1)$. 
Intersecting this event with the high-probability event from the previous step, we have
\begin{equation}\label{eq:phi-bound}
\bigl|\Phi(D^x_{\U,\U}) - \Phi(D_{\U,\U})\bigr|
\leq C c_1 n^{-\alpha/2}\log n \|d-d^*\|_1
\end{equation}
for all $x\in\U$ and all right--stochastic $D$, with probability $1-o(1)$.

Combining \eqref{eq:psi-gap} and \eqref{eq:phi-bound}, we obtain on a $1-o(1)$ event:
\begin{align*}
f(D) - f(D^x) &= \bigl[\Psi(D^x_{\U,\U}) - \Psi(D_{\U,\U})\bigr] + \bigl[\Phi(D_{\U,\U}) - \Phi(D^x_{\U,\U})\bigr] \\
&\geq 2c_0 \log n \|d-d^*\|_1
   - C c_1 n^{-\alpha/2}\log n \|d-d^*\|_1 
\end{align*}
Then with probability $1-o(1)$, taking $n$ sufficiently large, we have the strict inequality
\[
f(D) > f(D^x)
\quad\text{whenever } d\neq d^*.
\]
That is, for each row $x\in\U$, we can strictly decrease the objective by replacing the $x$-th row of $D$ with the true permutation row $d^*$, unless it was already equal to $d^*$.

Iterating this argument over all $x\in\U$, we see that with probability $1-o(1)$ any minimizer $D$ of $f$ must satisfy $D_{x,\cdot} = d^*$ for all $x\in\U$, i.e.,
\[
D_{\U,\U} = D^*_{\U,\U}
\]
is the unique optimal choice for the unrevealed block.

We have shown that, with probability $1-o(1)$, every optimal LP solution satisfies $D_{\U,\U} = D^*_{\U,\U}$, the permutation matrix of $\pi^*$ restricted to $\U$. In particular, the solution $(D^*,Y^*)$ returned by the LP solver has this property with high probability. In the projection step of Algorithm~\ref{alg:lp_matching} we run the Hungarian algorithm on $W := D^*_{\U,\U}$ and obtain a permutation $\pi_{\U}$. Since $W$ itself is a permutation matrix, any maximum-weight matching w.r.t.\ $W$ is exactly the permutation encoded by $W$, so $\pi_{\U} = \pi^*|_{\U}$.

Finally, we set $\hat{\pi}(u)=\pi_{\U}(u)$ for $u\in\U$, so $\hat{\pi}=\pi^*$ with probability $1-o(1)$, as claimed.

\end{proof}

\section{Experiments and Discussion}

We empirically evaluate the algorithms in Algorithms \ref{alg:seeded_matching}--\ref{alg:lp_matching} in the almost fully seeded regime. The experiments are designed to test two claims suggested by our theory:
\begin{enumerate}
    \item For $\scsbm$ instances with $|\U| = n^{1-\alpha}$, accuracy should approach exact recovery as $s$ increases such that $\lambda s^2 > 1-\alpha$.
    \item On real graphs, simple seed-neighborhood overlap methods should remain competitive with established baselines on a range of problems from the matching literature.
\end{enumerate}

\subsection{Experimental Protocol}
Given the ground truth $\pi^*$, we report the unseeded accuracy
\[
\mathrm{Acc}(\hat{\pi})=\frac{1}{|\U|}\sum_{u\in\U}\Ind{\hat{\pi}(u)=\pi^*(u)}.
\]
We additionally report runtime in seconds for the matching stage on $\U$. The reported numbers are means over 20 trials, with one standard deviation in parentheses.

For each trial and dataset, we select a revealed set $\R$ uniformly at random among vertices, with a prescribed seed fraction $\delta = |\R|/n$, then we evaluate accuracy on $\U = [n] \setminus \R$ only.

For the synthetic plots in Figure~\ref{fig:accuracy-seedfrac} we vary the seed fraction $\delta$; for the dataset summary tables we fix $\delta=0.9$, except for the auto dataset, where $\delta = 0.99$.

\subsection{Algorithms}
We now specify the methods used in the experiments. As described in Algorithms \ref{alg:seeded_matching} and  \ref{alg:greedy_matching}, the Hungarian and Greedy methods use the score matrix
$\Score(u,v)=|\Nbr[A]{u}\cap \Nbr[B]{v}|$, where neighborhoods are restricted to revealed nodes $\R$.
\textsc{SGM} is the seeded Frank--Wolfe QAP approach of \cite{fishkind2019seeded} (via \texttt{graspologic}). SGM applies a Frank--Wolfe relaxation to the quadratic assignment problem with seed constraints and is among the strongest scalable baselines for seeded graph matching on real networks. Unlike our methods, SGM optimizes a quadratic objective rather than a linear overlap-based criterion.
Our two-hop baseline \textsc{Hop2} follows the two-hop signature idea in \cite{ding2021efficient} and subsequent uses in \cite{yu2021graph,mao2023}, using 2-hop neighborhoods from unrevealed nodes. Nodes are represented by binary two-hop connectivity patterns, and matching is performed via neighborhood overlap. The \textsc{FW-Linear} baseline is a fast Frank--Wolfe heuristic for the $\ell_1$ objective $\min_{D\in\mathcal{D}}\|AD-DB\|_1$ with seed constraints as specified in Algorithm \ref{alg:fw_linear_matching}. Due to runtime concerns, we run the original LP \textsc{Exact LP} from Algorithm \ref{alg:lp_matching} in a separate comparison with \textsc{FW-Linear}.

\subsection{Datasets}
We follow standard preprocessing, applying symmetrization to all adjacency matrices as in \cite{Lyzinski14,fishkind2019seeded,ding2021efficient}. All graphs are treated as unweighted and undirected.

\begin{enumerate}[leftmargin=*,itemsep=2pt]
\item \textbf{Synthetic CSBM:} We generate $(A,B)\sim\scsbm(n,a,b,s,\R,\sigma^*,\pi^*)$ with balanced communities and connection probabilities $a\log n/n$ and $b\log n/n$, using $a=5$, $b=1$ and $s=0.8$. We set $n=1000$ for the table, and vary $\delta$ for Figure~\ref{fig:accuracy-seedfrac}.
\item \textbf{C.\ elegans connectome:} Two modalities (chemical and electrical) on a common vertex set, following \cite{Lyzinski14,fishkind2019seeded}.
\item \textbf{Enron email:} Communication graph used in seeded matching benchmarks \cite{Lyzinski14,fishkind2019seeded}.
\item \textbf{Wikipedia bilingual neighborhoods:} Matching between French and English language neighborhoods as in \cite{Lyzinski14,fishkind2019seeded}.
\item \textbf{Facebook friendship network:} A friendship network of Stanford students and faculty originally used in \cite{Traud2012} and further studied in \cite{ding2021efficient,yu2021graph}.
\item \textbf{Autonomous systems (AS):} Multiple snapshots of Oregon Route Views peering graphs, as used in \cite{yu2021graph}. We report results for a representative pair along the first 10,000 nodes. For this dataset, we use a seed fraction of $\delta = 0.99$.
\end{enumerate}

\subsection{Results}
Tables~\ref{tab:accuracy}--\ref{tab:runtime} summarize accuracy and runtime at the seed fraction $\delta=0.9$. Table \ref{tab:accuracy-linear} compares the Frank--Wolfe approximation to the underlying linear program in Algorithm \ref{alg:lp_matching}. In addition to keeping track of the accuracy and runtime, we include the size of the largest connected component of the intersection graph. For this table, the correlated SBM was reduced to 500 nodes and the autonomous systems dataset was excluded for runtime considerations. Figure~\ref{fig:accuracy-seedfrac} shows the synthetic CSBM accuracy as a function of the seed fraction.

\begin{table*}[ht]
\centering
\small
\begin{tabular}{lllllll}
\hline
Dataset & Hungarian & Greedy & SGM & Hop2 & FW-Linear & Size \\
\hline
csbm & 0.87 (0.03) & 0.88 (0.03) & 0.90 (0.04) & 0.86 (0.03) & 0.90 (0.04) & 1000 \\
elegans & 1.00 (0.00) & 1.00 (0.00) & 1.00 (0.00) & 1.00 (0.00) & 1.00 (0.00) & 279 \\
wiki & 0.83 (0.04) & 0.76 (0.06) & 0.84 (0.04) & 0.49 (0.05) & 0.85 (0.04) & 1382 \\
enron & 0.55 (0.11) & 0.52 (0.10) & 0.83 (0.12) & 0.48 (0.12) & 0.78 (0.08) & 184 \\
facebook & 1.00 (0.01) & 1.00 (0.00) & 1.00 (0.01) & 1.00 (0.01) & 1.00 (0.00) & 769 \\
auto & 0.69 (0.06) & 0.67 (0.06) & 0.88 (0.04) & 0.83 (0.05) & 0.87 (0.04) & 10000 \\
\hline
\end{tabular}
\caption{Mean accuracy by dataset over 20 trials (1 sd)}
\label{tab:accuracy}
\end{table*}

\begin{table*}[ht]
\centering
\small
\begin{tabular}{lllllll}
\hline
Dataset & Hungarian & Greedy & SGM & Hop2 & FW-Linear & Size \\
\hline
csbm & 0.07 (0.06) & 0.06 (0.01) & 0.31 (0.01) & 0.08 (0.01) & 0.10 (0.01) & 1000 \\
elegans & 0.02 (0.01) & 0.02 (0.00) & 0.03 (0.02) & 0.05 (0.10) & 0.03 (0.00) & 279 \\
wiki & 0.25 (0.11) & 0.21 (0.01) & 0.70 (0.18) & 0.29 (0.02) & 0.27 (0.02) & 1382 \\
enron & 0.00 (0.01) & 0.01 (0.01) & 0.03 (0.01) & 0.01 (0.01) & 0.01 (0.01) & 184 \\
facebook & 0.18 (0.01) & 0.22 (0.18) & 0.21 (0.02) & 0.25 (0.02) & 0.21 (0.02) & 769 \\
auto & 0.31 (0.22) & 0.29 (0.14) & 0.37 (0.15) & 1.75 (0.17) & 1.73 (0.15) & 10000 \\
\hline
\end{tabular}
\caption{Mean runtime by dataset over 20 trials in seconds (1 sd)}
\label{tab:runtime}
\end{table*}

Across synthetic and real datasets, several patterns emerge. On the CSBM, all methods achieve near-perfect accuracy once $\delta$ reflects the separation predicted by the theory. The LP (Exact LP) and FW-Linear methods use additional information from connections between unrevealed nodes and generally have better performance. Nevertheless, neighborhood-overlap methods remain competitive with state-of-the-art SGM on most datasets, often meeting or exceeding the accuracy despite being simpler and faster. This supports our claim that when a large fraction of seeds are available, direct seed neighborhoods capture the usable alignment signal.

\begin{table*}[ht]
\centering
\small
\begin{tabular}{llllll}
\hline
Dataset & Exact LP & FW-Linear & Giant Size & $\Delta$ Objective & Size \\
\hline
csbm & 0.93 (0.04) & 0.93 (0.04) & 441 & 38.2 (2.27) & 500 \\
elegans & 1.00 (0.00) & 1.00 (0.00) & 279 & 0 (0.00) & 279 \\
wiki & 0.86 (0.05) & 0.87 (0.05) & 739 & 287 (22.4) & 800 \\
enron & 0.53 (0.15) & 0.77 (0.13) & 145 & 11.9 (8.41) & 184 \\
facebook & 1.00 (0.00) & 1.00 (0.01) & 762 & 1020 (92.4) & 769 \\
\hline
\end{tabular}
\caption{Accuracy and objective (FW-Exact) comparison between Algorithm \ref{alg:lp_matching} and Algorithm \ref{alg:fw_linear_matching}  (1 sd)}
\label{tab:accuracy-linear}
\end{table*}

Table~\ref{tab:accuracy-linear} compares the exact LP solution of Algorithm~\ref{alg:lp_matching} with its Frank--Wolfe approximation. Despite using a looser heuristic, FW-Linear achieves nearly identical accuracy on all datasets. When the LP objective is uniquely minimized at the true permutation, the Frank--Wolfe approximation typically attains an objective value close to the optimum; in cases where the optimum is zero, FW-Linear may converge to a nearby but still correct extreme point.

In particular, when the LP objective is minimized uniquely at the true permutation, the FW approximation reliably converges to a similar solution, even with a modest number of iterations. The small objective gaps observed on some datasets suggest that FW-Linear often reaches a near-optimal extreme point without requiring full convergence of the relaxation.

\begin{figure}[ht]
    \centering
    \includegraphics[width=0.9\linewidth]{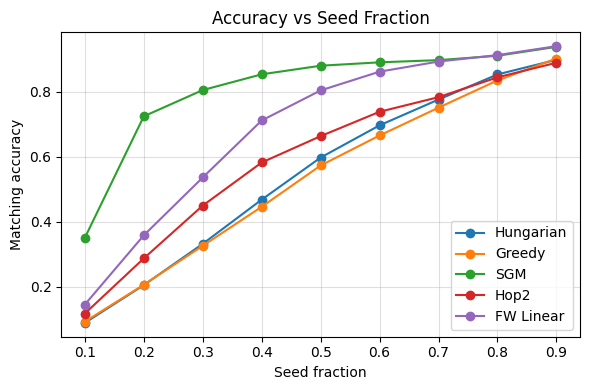}
    \caption{Accuracy as a function of seed fraction for CSBM on 1000 nodes.}
    \label{fig:accuracy-seedfrac}
\end{figure}

Figure ~\ref{fig:accuracy-seedfrac} shows the dependence of matching accuracy on the seed fraction $\delta$ averaged over twenty trials. We note that the Hungarian, greedy, and Frank--Wolfe LP curves exhibit a transition from partial to near-exact recovery as $\delta$ increases. In addition, the greedy and Hungarian methods perform similarly over the full range of $\delta$, suggesting that global optimality of the alignment is tied to the presence of strong local signals.

\subsection{Discussion}

Our results focus on the almost fully seeded regime for graph matching, where all but $n^{1-\alpha}$ vertices are revealed and the task is to align the remaining nodes efficiently. In this setting, we show that the presence of a large seed set fundamentally alters the difficulty of the problem: when $\lambda s^2 > 1-\alpha$, simple neighborhood--overlap statistics computed on the revealed vertices already contain enough signal to achieve exact recovery with high probability.
The synthetic experiments corroborate this theoretical picture, demonstrating that even simple neighborhood--overlap methods such as greedy or Hungarian matching perform competitively once the overlap signal is strong.

Beyond validating the theory, the experiments highlight a broader algorithmic message: in the almost fully seeded regime, the alignment signal is dominated by interactions with the revealed set, and global optimization over the unrevealed block becomes comparatively benign, as the objective landscape is dominated by the strong linear signal induced by the revealed vertices. This helps explain why simple score-based methods, a bistochastic LP relaxation, and its Frank--Wolfe approximation all behave similarly in practice when the seed fraction is large, despite their different computational profiles.

A compelling direction for future work is to extend our analysis to the joint community detection--matching problem in the semi-supervised correlated SBM. In this setting, one observes two correlated community-structured networks along with a subset of revealed vertex correspondences, and the objective is to simultaneously recover the true permutation and the community labels. Recent work has shown that matching and clustering can interact in subtle and mutually reinforcing ways: Gaudio, R\'acz, and Sridhar~\cite{Gaudio22} demonstrated that two correlated SBMs can enable exact recovery in regimes where neither matching nor community detection alone is feasible, while previous work~\cite{Nosratinia2018, Nis24} established that a vanishing fraction of revealed nodes can dramatically lower community-recovery thresholds. Wang et al.~\cite{Wang24} further showed that attribute information can shift feasibility boundaries in aligned inference tasks. Building on these insights, a natural goal is to characterize the optimal statistical and computational thresholds when both seed correspondences and partial community labels are available, and to design efficient algorithms that leverage both structural and labeling side information. Such an extension would unify seeded matching and semi-supervised clustering within a single theoretical framework, and delineate the precise trade-off between graph correlation, community signal strength, and the fraction of revealed nodes required for exact recovery in structured multi-graph settings.

\bibliographystyle{abbrvnat}
\bibliography{references}

\end{document}